\newcommand{\R}{\mathbb R}
\newcommand{\N}{\mathbb N}
\newcommand{\Z}{\mathbb Z}
\newcommand{\al}{\alpha}
\newcommand{\ve}{\varepsilon}
\newcommand{\vt}{\vartheta}
\newcommand{\vp}{\varphi}
\newcommand{\sset}{\subseteq} 
\newcommand{\uguale}{\stackrel{.}{=}} 
\theoremstyle{plain}
\newtheorem{theorem}{Theorem}[section]
\newtheorem{lemma}[theorem]{Lemma}
\newtheorem{proposition}[theorem]{Proposition}
\theoremstyle{definition}
\newtheorem{example}[theorem]{Example}
\newtheorem{definition}[theorem]{Definition}
\newtheorem{remark}[theorem]{Remark}
\definecolor{ceruleanblue}{rgb}{0.16, 0.32, 0.75}
\definecolor{cocoabrown}{rgb}{0.82, 0.41, 0.12}
\definecolor{crimsonglory}{rgb}{0.75, 0.0, 0.2}
\definecolor{aqua}{rgb}{0, 0.51, 0.5}
\definecolor{codegreen}{rgb}{0,0.6,0}
\definecolor{codegray}{rgb}{0.5,0.5,0.5}
\definecolor{codepurple}{rgb}{0.58,0,0.82}
\definecolor{backcolour}{rgb}{0.95,0.95,0.95}
\lstdefinelanguage{TOML}{
    comment = [l]{\#},
    keywords = {true, false},
    morestring = [b]{"}
}
\lstdefinelanguage{Julia}
  {morekeywords={abstract,break,case,catch,const,continue,do,else,elseif,%
      end,export,false,for,function,immutable,import,importall,if,in,%
      macro,module,otherwise,quote,return,switch,true,try,type,typealias,%
      using,while},
   sensitive=true,
   morecomment=[l]\#,
   morecomment=[n]{\#=}{=\#},
   morestring=[s]{"}{"},
   morestring=[m]{'}{'},
}[keywords,comments,strings]
\lstdefinestyle{mystyle}{
	backgroundcolor=\color{backcolour},
    commentstyle=\color{codegreen},
    keywordstyle=\color{magenta},
    numberstyle=\tiny\color{codegray},
    stringstyle=\color{codepurple},
    basicstyle=\ttfamily\footnotesize,
    breakatwhitespace=false,         
    breaklines=true,                 
    captionpos=b,                    
    keepspaces=true,                 
    numbers=none,                    
    numbersep=5pt,                  
    showspaces=false,                
    showstringspaces=false,
    showtabs=false,                  
    tabsize=2,
	xleftmargin=0.2cm,
}
\let\grad\relax
\DeclareMathOperator{\grad}{grad}
\DeclareMathOperator{\hess}{Hess}
\title{Equivariant optimisation for the gravitational $n$-body problem: a computational factory of symmetric orbits}
\author{Vivina Barutello \and Mattia G. Bergomi \and Gian Marco Canneori
		\and Roberto Ciccarelli \and Davide L. Ferrario \and Susanna Terracini 
		\and Pietro Vertechi}
\date{}
\begin{document}

\maketitle

\abstract{In this paper we present \emph{SymOrb.jl}} \cite{SymOrbJL}, a software which combines group representation theory and variational methods to provide numerical solutions of singular dynamical systems of paramount relevance in Celestial Mechanics and other interacting particles models. Among all, it prepares for large-scale search of symmetric periodic orbits for the classical $n$-body problem and their classification, paving the way towards a computational validation of Poincar\'e conjecture about the density of periodic orbits. Through the accessible language of Julia, Symorb.jl offers a unified implementation of an earlier version  \cite{symorb}. This paper provides theoretical and practical guidelines for the specific approach we adopt, complemented with examples. \vspace{2mm}\\
\noindent\emph{2020 Mathematics Subject Classification.} 70F10, 70-08, 37C81, 65K10, 65L10 \\
\noindent\emph{Keywords.} Singular dynamical systems, gravitational $n$-body problem, symmetric periodic orbits, equivariant optimisation methods.

\tableofcontents
\section*{Introduction}

Among natural dynamical systems arising in classical mechanics, the gravitational $n$-body problem has become a paradigmatic example of a singular system, both for its simple and clean formulation, and the notable complexity of its orbits' structure. The question is the following: what are the trajectories of $n$ heavy bodies that move in the Euclidean space under their mutual gravitational attractions, and how do we describe their motions? For $x_1,\ldots,x_n\in\R^3$ heavy bodies with masses $m_1,\ldots,m_n>0$, classical Newton laws provide the equations of motion, which
\[
	m_k\ddot{x}_k(t)=-\sum\limits_{j\neq k}m_km_j\frac{x_k(t)-x_j(t)}{\|x_k(t)-x_j(t)\|^3},\quad \text{for each}\ k\in\{1,\ldots,n\},
\]
where $\|\cdot\|$ is the Euclidean norm in $\R^3$, so that a solution of the $n$-body problem is the list of trajectories of the bodies. At first glance, the set of singularities of this system of equations might be large and difficult to manage: whenever a body $x_j$ collapses on another body $x_i$, the motion equations are not defined. When $n=2$, the picture is quite clear: each body follows a conic section, which can be easily classified in terms of the mechanical energy. Nonetheless, the 3-body problem is already a non-integrable system, and small perturbations on initial conditions can cause a dramatic displacement of nearby orbits. This sensitivity of the initial data flow has even caught the attention of contemporary folklore, raising the interest of this elegant old problem.  The theoretical foundations which motivate the research fervour around this problem can be traced back to the famous Poincar\'e conjecture \cite{Poincare}. A possibly chaotic dynamics is strictly related to the existence of a dense family of periodic solutions:

\medskip

{\it
``D'ailleurs, ce qui nous
rend ces solutions p\'{e}rio\-di\-ques si
pr\'{e}cieuses, c'est qu'elles sont, pour ainsi dire,
la seule br\`{e}che par o\`{u} nous puissons
essayer de p\'{e}n\'{e}trer dans une place
jusqu'ici r\'{e}put\'{e}e inabordable...''.
}

\medskip

Indeed, according to Poincar\'e, all trajectories can be approximated by periodic ones:

\medskip

{\it ``...voici un fait que je n'ai
pu d\'emontrer rigoureusement,
mais qui me parait pourtant tr\`es vraisemblable.
\'Etant donn\'ees des \'equations de la forme d\'efinie
dans
le n. 13\footnote{Formula 13, mentioned by Poincar\'{e}, is the Hamilton equation.}
et une solution particuli\`ere quelconque de ces \'equa\-tions,
one peut toujours trouver une solution
p\'erio\-dique (dont la p\'eriode
peut, il est vrai, \^etre tr\`es longue),
telle que la diff\'erence entre
les deux solutions soit
aussi petite qu'on le veut, pendant un temps aussi
long qu'on le veut.''
}

\medskip

As a result, the presence of rich sets of periodic solutions is a suitable indicator of complexity for dynamical systems. Moreover, the knowledge of classical periodic orbits has profound implications for the associated quantum problem. Periodic orbits form the backbone of semiclassical approximations, such as the Gutzwiller trace formula, which connects the quantum energy levels with the classical periodic trajectories. This connection is pivotal for interpreting spectral properties and predicting quantum states.

Since the 90s of the last century, variational methods have shown to be successful in producing selected solutions of singular systems (among others, see \cite{BaRa1989,BaRa1991,SerraTer1992, MajTer1993,SerraTer94, MajTer1995}) and eventually such techniques have found a powerful application in search of periodic orbits for the $n$-body problem (see in particular the celebrated papers \cite{ChMont1999,FT2003}). Nowadays, it is widely known that one could look for collision-less periodic solutions among critical points of the associated Lagrange-action functional
\begin{equation}
	\mathcal{A}(x)=\int_0^T\left[\frac12\sum\limits_{i=1}^n m_i\|\dot{x}(t)\|^2+\sum\limits_{i<j}\frac{m_im_j}{\|x_i(t)-x_j(t)\|}\right]\,dt.
	\label{eq:action}
\end{equation}
over the space of T-periodic loops: a peculiar guide on this approach can be found in the monograph \cite{AC-Z}. Within the class of all critical points, it seems more natural to look for minimisers of $\mathcal{A}$, especially if it is feasible to recover coercivity and to work with \emph{closed} subsets of the paths space. 

To a wider extent, an essential guide in the hunting of solutions via variational methods can be summarised in these steps: 
\begin{itemize}
	\item introduce a functional which is differentiable over a space of candidate solutions. Such a functional can have a geometrical or dynamical connection with the problem and naturally arises from the weak formulation of the motion equations. This naturally leads to work with the Sobolev space of $H^1$ functions (see e.g. \cite{struwe}). Usually, Jacobi-length and Maupertuis functionals reflect the Riemannian structure of the problem, while Energy and Lagrange-action functionals are more related to the dynamical nature of the problem;
	\item state and prove a variational principle which guarantees that non-trivial collision-less critical points of this functional, or at least some re-parametrisations of them, solve the motion equations. Depending on the functional involved, one can have a \emph{Maupertuis principle} or a \emph{Least action principle};
    \item look for critical points of the chosen functional. For instance, if coercivity and compactness of sub-levels for the functional can be proved, direct methods in calculus of variations apply (e.g., see again \cite{struwe}) and one naturally looks for minimisers. 
\end{itemize}
Concerning the $n$-body problem, the first obstruction is the lack of compactness in the open set of collision-less paths and hence of the sub-levels of $\mathcal{A}$. Furthermore, an elementary remark excludes coercivity of the action functional: any sequence of constant loops with diverging mutual distances between the bodies has a diverging $H^1$-norm, but $\mathcal{A}$ remains bounded on such sequence. Nonetheless, different types of reduction of the $n$-body problem to more affordable models allow to recover coercivity and to succeed in the minimisation process (for the symmetric $n$-body problem see \cite{BeCZ1991,ChMont1999,FT2003,Chen2001,RT95,ChenVen2000,BT2004,FuGroNeg,MonSte2013,Simo2001}. We also refer to the $n$-centre problem \cite{Bol1984,KK1992,Kn2002,ST2012,BolKoz2017,BCTarma,BC2024}). This paper is concerned with a particular reduction case, requiring the candidate paths to satisfy some chosen symmetries in time, space, and body labels. 

In this direction, a very successful approach has been presented in \cite{FT2003}, where the authors recover coercivity by symmetrising the space of paths through the action of a finite group $G$. Under major hypotheses on the action of $G$, the authors have shown that minimisers of $\mathcal{A}$ are collision-less and so they actually solve a \emph{symmetrical} $n$-body problem (see also \cite{Fer2024,Fer2006,Fer2007,BFT1,XiaZhou2022}). This rigorous argument has been supported by a numerical implementation of a constrained optimisation algorithm, which allows us to find a finite group $G$ verifying some chosen symmetries and to check if a $G$-equivariant optimisation is achievable. Once a $G$-equivariant space of closed paths is generated, an optimisation routine provides a candidate numerical solution of the $n$-body problem, which displays the symmetries fixed by $G$.

The first version of this algorithm, named \emph{Symorb} by its creator Davide Ferrario, is available in \cite{symorb}. Symorb is based on a combination of Python, Fortran, and the computer algebra symbolic language GAP (see \cite{GAP}). The present paper introduces \emph{SymOrb.jl} \cite{SymOrbJL}, a reimplementation of Symorb's Fortran and Python routines in a unified language, Julia~\cite{bezanson2017julia}. It offers theoretical and practical guidelines for the variational approach we are adopting. We provide instructions for generating symmetric, periodic solutions to the $n$-body problem and subsequently computing them numerically.

\paragraph{Conclusions and perspectives}
The variational-computational approach to the search for symmetric solutions in the gravitational $n$-body problem represents a novel and flexible methodology. Through a combination of analytical-functional methods, it stands out from past research based on solving initial data problems. It offers unprecedented efficiency in identifying stable and unstable periodic orbits and understanding the intricate dynamics of multi-body celestial systems. It can easily be used for large-scale searches of equivariant trajectories, which we intend to classify with respect to their symmetry properties, variational characterisation and stability. We can collect symmetric orbits in an accessible database and we plan to use them as building blocks in the search of complex periodic trajectories, which do not necessarily satisfy a symmetry constraint. Furthermore, our computational exploration can be complemented with computer-assisted proofs, for rigorously proving the existence of new periodic solutions (see \cite{KapZgl, AriBarTer, KapSimo, CalAzpLesMir, CapKepMir}).

On the computational side, Julia provides first-class support for linear and nonlinear operations on mutidimensional arrays with an accessible, concise syntax. Julia's Just In Time (JIT) compilation model enables execution speed on par with statically compiled languages while allowing for an efficient and interactive developer experience.
This is particularly crucial in our scenario, as \texttt{SymOrb.jl} needs to run complex custom code in the hot loop of an optimization routine, thus not only library but also user-defined code needs to be highly performant. Additionally, the Julia ecosystem encompasses support for GPU programming~\cite{besard2019prototyping} and modern optimization methods \cite{Optim.jl}.

\paragraph{Notations}
Let $\mathbb{R}^d$ be the $d$-dimensional Euclidean space and $m_1,\ldots,m_n>0$ be positive masses, with $n\geq 2$. We introduce the configuration space of $n$ heavy particles with centre of mass in the origin:
\[
\mathcal{X}\uguale\left\lbrace x=(x_1,\ldots,x_n)\in(\mathbb{R}^d)^n\,\middle\vert\ \sum\limits_{i=1}^nm_ix_i=0 \right\rbrace.
\]
Concerning collisions, we define the following singular sets 
\[
\Delta_{ij}\uguale\left\lbrace x\in\mathcal{X}\,\middle\vert\,x_i=x_j\right\rbrace,\quad\Delta\uguale\bigcup\limits_{i,j}\Delta_{ij}
\]
so that we can also introduce the set of collision-free configurations $\hat{\mathcal{X}}\uguale\mathcal{X}\setminus\Delta$. The potential function associated with the classical $n$-body problem is usually introduced as 
\[
U(x)\uguale\sum\limits_{i<j}\frac{m_im_j}{\|x_i-x_j\|},\quad \text{for}\ x\in\mathcal{X},
\]
while the kinetic energy function is
\[
K(\dot{x})\uguale\frac12\sum\limits_{i=1}^nm_i\|\dot{x}_i\|^2,\quad\text{for}\ \dot{x}\in T_x\mathcal{X};
\]
finally, the Lagrangian function reads
\[
L(x,\dot{x})\uguale K(\dot{x})+U(x),\quad\text{for}\ x\in\mathcal{X},\ \dot{x}\in T_x\mathcal{X}.
\]
With these notations, the motion equations of $n$ bodies $x_1(t),\ldots,x_n(t)$ in the configuration space $\mathcal{X}$ read:
\begin{equation}\label{eq:newton}
	m_i\ddot{x}_i=\frac{\partial U}{\partial x_i}, \qquad i=1,\ldots,n,
\end{equation}
which are nothing but the Euler-Lagrange equations of the above Lagrange function $L$. 
\begin{remark}\label{rem:general_problem}
	The theoretical results recalled in this paper and proved in \cite{FT2003}, together with the numerical procedures summarised in the last section, adapt to a wider class of potential functions $U$. In particular, we refer to the minimal hypotheses required on $U$ introduced in \cite{BFT2}, in order to guarantee the existence of collision-less critical points of $\mathcal{A}$. For instance, the $\al$-homogenous $n$-body problem can be considered, but also quasi-homogenous and logaritmic potentials are allowed in this analysis (for completeness, we particularly refer to \cite[Section 7]{BFT2}). As will be clear in Section \ref{sec:optimisation}, the numerical optimisation routine allows us to choose a generic function $f=f(\|x_i-x_j\|)$ instead of the singular quotient $\frac{1}{\|x_i-x_j\|}$. Clearly, the convergence to a collision-less critical point is not always guaranteed. 
	
	When dealing with the $n$-body problem, it is natural to rephrase this dynamical system in a uniform rotating frame. As a matter of fact, the results proved in \cite{FT2003} are valid in this more general situation, where the kinetic energy of the system clearly depends on $x$ (see also Section \ref{sec:optimisation}). For the sake of simplicity, we prefer to present the theoretical statements of this paper in the interial frame. 
\end{remark}

As already remarked in the Introduction, there is a major interest in looking for periodic solutions of \eqref{eq:newton}. A periodic solution of \eqref{eq:newton} has the property that every trajectory $x_i(t)$ of the $i$-th body is a closed curve in $\R^d$, usually referred as a \emph{loop} in the configuration space. As a short-hand notation, for $T>0$ we consider the time circle $\mathbb{T}\simeq\mathbb{R}/T\mathbb{Z}$, so that the space of $H^1$-periodic loops in $\mathcal{X}$ can be defined as
\[
\Lambda\uguale H^1(\mathbb{T};\mathcal{X})=\{x\in L^2([0,T];\mathcal{X}):\,\dot{x}\in L^2([0,T]; \mathcal{X}),\ x(0)=x(T)\},
\]
where $\dot{x}$ denotes the weak derivative of $x$ and the boundary condition makes sense since $H^1$ functions are indeed continuous by classical Sobolev embedding. Recalling that $\hat{\mathcal{X}}$ is the set of non-collision configurations, we introduce also the non-collision counterpart of $\Lambda$
\[
	\hat{\Lambda}\uguale H^1(\mathbb{T};\hat{\mathcal{X}})
\] 
and, for $x=x(t)\in\Lambda$, we define the set of \emph{collision times} of $x$ as the set $x^{-1}\Delta\sset\mathbb{T}$. We finally consider the Lagrange action functional $\mathcal{A}\colon\Lambda\to\R\cup\{+\infty\}$ defined as
\[
\mathcal{A}(x)\uguale\int_0^TL(x(t),\dot{x}(t))\,dt,\quad\text{for}\ x\in\Lambda.
\]
Note that, since $\mathcal{A}\in\mathcal{C}^1(\hat{\Lambda})$, it is well-known that if $x\in\hat{\Lambda}$ is a critical point of $\mathcal{A}$, then $x$ is a $T$-periodic solution of \eqref{eq:newton} (see for instance Lemma 19.1 in \cite{AC-Z}).

\paragraph*{Outline of the paper}
The paper is organised as follows. In Section \ref{sec:sym_group} we briefly recall the concept of \emph{group action} and its connection with symmetries. In Section \ref{sec:sym_lambda} we will see how the machinery of group actions can be very usueful to symmetrise the loop space $\Lambda$, we will state some ad-hoc variational principles for equivariant loops and we will collect some useful properties and classifications of groups acting on $\Lambda$. Finally, Section \ref{sec:optimisation} is devoted to the search of equivariant critical points of $\mathcal{A}$, together with a survey on the numerical implementation of this procedure. The appendix sections contains some supplementary resources on the topics of this paper.

\section{Symmetries as group actions}\label{sec:sym_group}

Representation theory offers the perspective of thinking of groups as structures which \emph{act} on mathematical objects, such as sets or spaces. Our first aim is to endow the space of $H^1$-periodic loops $\Lambda$ with a prescribed symmetry constraint. A powerful way to describe such a symmetry is through the \emph{action} of a finite group $G$ on $\Lambda$. A rigorous definition of \emph{group action} is given in the following:

\begin{definition}[Group action]\label{def:action}
	We say that a finite group $G$ acts (on the left) on a set $X$ if there exists a map $\phi\colon G\times X\to X$ such that
	\begin{itemize}
		\item $\phi(1,x)=x$ for any $x\in X$;
		\item $\phi(g,\phi(h,x))=\phi(gh,x)$ for any $g,h\in G$ and $x\in X$,
	\end{itemize}
i.e., the map $\phi$ commutes with the group operation. We will often write $gx$ instead of $\phi(g,x)$. We say that the action of $G$ on a set $X$ is \emph{transitive} if for any $x,y\in X$ there exists $g\in G$ such that $gx=y$.
\end{definition}

Therefore, a group action is essentially a way for $G$ to interact with the set $X$. Given $x\in X$, it could be useful to regroup those elements of $G$ which fix $x$; on the other hand, we could need to gather those elements of $X$ which are invariant through the action of the whole $G$ or of a subgroup $H$ of $G$. For instance, if $X=\R^2$ and $g\in G$ is a reflection with respect to the line $y=x$, we see that all the points $(x,x)$ are fixed by $g$. In the following, we will use the notations $H<G$ and $H\lhd G$ to denote respectively the subgroup and normal subgroup relations.
\begin{definition}[Isotropy groups and projectors]\label{def:isotropy}
	For an element $x\in X$ we define the \emph{isotropy group} of $x$ in $G$ as
	\[
	G_x\uguale\{g\in G:\, gx=x\},
	\]
	and $G_x < G$. Moreover, if $H<G$, we define the set of \emph{points fixed by} $H$ as the set
	\[
	X^H\uguale\{x\in X:\,G_x\supseteq H\}=\{x\in X:\, hx=x,\ \forall\,h\in H\}.
	\]
	We define also the \emph{projector onto} $X^H$ as the map $\pi_H\colon X\to X^H $ which acts in this way:
	\begin{equation}\label{eq:projector}
		\pi_H\,x\uguale \frac{1}{|H|}\sum\limits_{h\in H} hx.
	\end{equation}
\end{definition}

\begin{remark}\label{rem:weyl}
	We recall that if $H<G$, then the normaliser $N_GH$ of $H$ in $G$ is the smaller subgroup of $G$ in which $H$ is normal. Moreover, the \emph{Weyl group} of $H$ in $G$ is the quotient group $W_GH = N_GH/H$. If $G$ acts on $X$, then the Weyl group $W_GH$ acts on $X^H$. Indeed, since  and $H\lhd N_GH$, then the cosets of $H$ naturally act on $X^H$.
\end{remark} 

From a geometrical point of view groups can be considered as \emph{sets of symmetries} of a mathematical object. For instance, the symmetries of a square are exactly the elements of the dihedral group $D_8$ (see Definition \ref{def:dihedral}). 


In this context, given a set $X$ and a group $G$ acting on it, the action of $G$ on $X$ induces a group homomorphism between $G$ and $\Sigma_X$ (the group of all permutations of elements of $X$)
\[
a\colon G \to \Sigma_X,\quad a(g)\colon X\to X,\quad a(g)x=gx. 
\]
Indeed, for any $g \in G$, the group action implies that the map $a(g)$ is a bijection with inverse $a(g^{-1})$; for the same reason, the set $\left\{a(g): g \in G\right\}$ is a subgroup of $\Sigma_X$.
The homomorphism $a$ is an example of \emph{representation} of $G$, which is obtained by starting from its action on $X$. Below, we list some examples of representations of $G$.

\begin{example}[$X$ finite, action of $D_{2n}$]\label{ex:finite}
Take $X=\{1,\ldots,n\}$ so that $\Sigma_X=\Sigma_n$, the \emph{symmetric group} of $n$ elements. We can label the vertices of a regular $n$-gon with elements of $X$ and take $G=D_{2n}$. If $g$ is the rotation of $\frac{2\pi}{n}$, then $g$ corresponds to the cycle $(1,\ldots,n)$. On the other hand, if $n$ is even and $g$ is the reflection with respect to the symmetry axis through the vertices 1 and $n/2+1$, then $g$ is represented in $\Sigma_n$ as the permutation $(2,n)(3,n-1)\cdots(n/2,n/2+2)$. If $n$ is odd, the reflection is represented as the permutation $(2,n)(3,n-1)\cdots((n+1)/2,(n+3)/2)$. We then understand that the way a group $G$ acts on a space determines a representation of $G$, and viceversa.
\end{example}

\begin{example}[$X$ linear space]\label{ex:linear}
	Assume now that $X$ is a linear space over $\mathbb R$. It is natural to require that the action of $G$ on $X$ preserves the linear structure of the vector space, that is, for any $g \in G$, $x_1,x_2 \in X$ and $\lambda,\mu \in \mathbb R$
	\[
	a(g)(\lambda x_1+\mu x_2) = \lambda a(g)x_1+\mu a(g)x_2.
	\]
	This means that, for any $g \in G$, the map $a(g)$ is a linear transformation from $X$ to itself and thus $a(g) \in GL(X)$. In this case we refer to $a\colon G \to GL(X)$ as a \emph{linear representation of $G$}. On the other hand, when $X={\mathbb R}^n$ then the whole $GL(n)$ acts on ${\mathbb R}^n$ in the natural way. A group $G$ acting on  ${\mathbb R}^n$ can be represented as a subgroup of $GL(n)$; in particular, one could require not only the linearity of the group action, but also to be angle-preserving. In the latter case, $G$ is represented into $O(n)$ and the representation is usually called \emph{orthogonal}. Note that such representation always exists (the trivial action is always admitted).
\end{example}

\section{Action of finite groups on $\Lambda$}\label{sec:sym_lambda}

In this section we deepen the main ideas of \cite{FT2003} and we define group actions on the loop space $\Lambda = H^1(\mathbb T; \mathcal X)$ in order to introduce a symmetry constraint. As a preliminary remark, since $\Lambda$ is infinite-dimensional, it is clear that we have to look for finite dimensional objects on which a group can effectively act. 

Recall that $\mathbb T = \mathbb R /{T\mathbb Z}$ and that, for any $x=x(t) \in \Lambda$, we can write 
\[
x(t) = (x_1(t),\ldots,x_n(t)) \in \mathcal X\sset(\mathbb R^d)^n, \quad \text{for any } t \in \mathbb T.
\]
We can identify three finite dimensional objects in $\Lambda$: 
\begin{itemize}
	\item the {\em space} $\mathbb R^d$ in which every component $x_i(t)$ lies; 
	\item the {\em time circle} $\mathbb T\subset \mathbb R^2$ which represents the period of a trajectory;
	\item the {\em index set} $\{1,\ldots,n\}$ on which the $n$-bodies are labelled.
\end{itemize}
A way to define the action of $G$ on $\Lambda$ is then through three different  representations of $G$ as a group action respectively on $\mathbb R^d$, $\mathbb R^2$ and $\{1,\ldots,n\}$. 
More precisely, $G$ is represented as a subgroup of $O(d)$, of $O(2)$ and of the symmetric group $\Sigma_n$ (see Examples \ref{ex:finite} and \ref{ex:linear}) via the following homomorphisms
\[
\rho : G \to O(d), \quad \tau : G \to O(2), \quad \sigma: G \to \Sigma_n. 
\]
As an intermediate step, we use $\rho$ and $\sigma$ to define the action of $G$ on $\mathcal X$
\[
\phi: G \times \mathcal X \to \mathcal X, \quad \phi(g,x) = gx,
\]
where 
\[
gx=g(x_1,\ldots,x_n) = \left(\rho(g) x_{\sigma(g^{-1})1},\ldots,\rho(g) x_{\sigma(g^{-1})n}\right),
\]
or, shortly
\[
(gx)_i=g x_{g^{-1}i}.
\]
Let us observe that this action is orthogonal, in the sense that it preserves the inner product of $(\R^d)^n$; this is easy to verify since $\rho(g)\in O(d)$ and the permutation $\sigma(g^{-1})$ does not affect the inner product. For reasons that will be clear later, it makes sense to require that $\sigma\colon G\to\Sigma_n$ satisfies the following property
\begin{equation}\label{property:sigma}
	\forall\,g\in G,\ \text{if}\ \sigma(g^{-1})i=j\ \text{for some}\ i\neq j,\ \text{then}\ m_i=m_j,
\end{equation}
meaning that only bodies with equal masses can be \emph{interchanged} by $\sigma$. For instance, if the action of $G$ is transitive, then we will work with $n$ equal masses.

Since the time circle $\mathbb{T}$ is a 1-dimensional object immersed in $\R^2$, the action of $G$ on $\mathbb{T}$ can be defined through the representation $\tau\colon G\to O(2)$ in this way
\[
gt=\tau(g^{-1})t,\ \mbox{for}\ t\in\mathbb{T},\ g\in G.
\]

\begin{remark}\label{rem:tau_O(2)}
	Since $\tau\colon G\to O(2)$ is a homomorphism, then:
	\begin{itemize}
		\item $\ker\tau\lhd G$;
		\item $G/\ker\tau\simeq\tau(G)$ and so it is a finite subgroup of $O(2)$.
	\end{itemize}
    Thanks to Proposition \ref{prop:subgroups_orthogonal}, we deduce that $\tau(G)$ is either a cyclic or dihedral subgroup of $O(2)$. Since $\tau$ models an action of $G$ on $\mathbb{T}$, then any $\tau(g)$ is either a rotation or a reflection in the plane. With a slight abuse of notations, or assuming that $\theta$ and $t$ belog to the same circle, for any $t\in\mathbb{T}$ we have:
    \begin{itemize}
    	\item if $\tau(g)$ is a counter-clockwise rotation of $\vt$, then:
    	\[
    	\tau(g)t=\vt+t;
    	\]
    	\item if $\tau(g)$ is a reflection with respect to a line through the origin which forms an angle of $\vt/2$ with the horizontal axis, then:
    	\[
    	\tau(g)t=\vt-t.
    	\]
    \end{itemize}
\end{remark}
We have finally obtained a way to represent the action of $G$ on the loop space $\Lambda$ by means of the homomorphisms $\rho$, $\sigma$ and $\tau$, which is the following
\[
(gx)(t)=(\rho(g)x_{\sigma(g^{-1})1}(\tau(g^{-1})t),\ldots,\rho(g)x_{\sigma(g^{-1})n}(\tau(g^{-1})t))
\]
which in short we will write as 
\[
(gx)(t)=(gx_{g^{-1}i}(g^{-1}t))_i,
\]
for any $g\in G$, $t\in\mathbb{T}$ and $x\in\Lambda$. We also define the set of loops in $\Lambda$ fixed by $G$, or $G$-\emph{equivariant loops}, as the set
\[
\Lambda^G\uguale\{x\in\Lambda\colon (gx)(t)=x(t),\ \forall\,t\in\mathbb{T},\ g\in G\}
\]
and its collision-less subset
\[
\hat{\Lambda}^G\uguale\{x\in\hat{\Lambda}\colon (gx)(t)=x(t),\ \forall\,t\in\mathbb{T},\ g\in G\}.
\]
\begin{example}[$G=D_6$]\label{ex:d6_1}
	We consider the dihedral group 
	\[
	D_6 = \langle s,r : s^2 = r^3 = (sr)^2 = 1 \rangle, 
	\]
	which is the group of symmetries of a regular triangle in the $(x,y)$-plane. For simplicity, assume that $T=2\pi$ and one of the vertices of the triangle lies on the line $y=x$. According to Definition \ref{def:dihedral}, we can define the representations $\tau, \rho$ and $\sigma$ through their values on the generators $s$ and $r$
	\[
	\tau(s) = \begin{pmatrix}
		0 & 1 \\ 1 & 0
	\end{pmatrix} ,
	\quad 
	\tau(r) = \begin{pmatrix}
	\cos \frac{2\pi}{3} & -\sin \frac{2\pi}{3} \\ \sin \frac{2\pi}{3} & \cos \frac{2\pi}{3}
	\end{pmatrix},
	\]
	\[
	\rho(s) = -\mathrm{Id}_2 ,
	\quad 
	\rho(r) = \mathrm{Id}_2 ,
	\]
	\[
	\sigma(s) = (1,2),
	\quad 
	\sigma(r) = (1,2,3).
	\]
	Note that, using these homomorphisms, we are modelling a 3-body problem in the plane. It is clear that these representations depends on the geometrical interpretation of the elements of $D_6$ as transformations which fix a regular triangle. Let us observe that 
	\[
	\ker (\tau) = \ker (\sigma) = 1, \quad 
	\ker (\rho) = \langle r \rangle,
	\]
	and also that, by assumption \eqref{property:sigma}, we have $m_1=m_2=m_3$. Let us remark that $x \in \Lambda^G$ if and only if for every $t\in\mathbb{T}$
	\[
	(sx)(t)=x(t) \quad \text{and} \quad (rx)(t)=x(t).
	\]	
	The first relation can be rewritten in this way
	\[
	\rho(s)
	x_{\sigma(s^{-1})j}(\tau(s^{-1})t) = x_j(t),\quad \forall\,j\in\{1,2,3\}
	\]
	and $\tau(s)$ is a reflection with respect to the line $y=x$. By means of the notations introduced in Remark \ref{rem:tau_O(2)}, $\tau(s)$ acts as the map $t\to \frac{2\pi}{4}-t$ and so the the equivariance reads:
	\[
	\begin{cases}
		x_1(t) = -x_2\left(\frac{2\pi}{4}-t\right) \\
		x_2(t) = -x_1\left(\frac{2\pi}{4}-t\right) \\
		x_3(t) = -x_3\left(\frac{2\pi}{4}-t\right).
	\end{cases}
	\]
	Now, $\rho(r)$ is the identity matrix and $\tau(r)$ is a counter-clockwise rotation of $\frac{2\pi}{3}$; therefore $\tau(r^{-1})$ is a clockwise rotation of the same angle, that in $\mathbb T$ corresponds to a time-shift of $-\frac{2\pi}{3}$. Hence the equivariance with respect to $r$ reads
	\[
	\begin{cases}
		x_1(t) = x_2\left(t-\frac{2\pi}{3}\right) \\
		x_2(t) = x_3\left(t-\frac{2\pi}{3}\right) \\
		x_3(t) = x_1\left(t-\frac{2\pi}{3}\right)
	\end{cases}
	\]
	which means that every $D_6$-equivariant loop is in particular a {\em choreography}.
\end{example}

\subsection{Equivariant variational principles}

The next result guarantees that $\Lambda^G$ is a loop space which fits a variational argument to find periodic solutions as critical points of the action functional. We will not give the details of the proof since they are quite standard and follow from the definition of group action on $\Lambda$. In particular, the equivariance of $\mathcal{A}$ follows from assumption \eqref{property:sigma}.

\begin{proposition}\label{prop:equivariance}
	Let $G$ be a finite group, with orthogonal representations $\rho\colon G\to O(d)$, $\tau\colon G\to O(2)$ and $\sigma\colon G\to\Sigma_n$ satisfying \eqref{property:sigma}. Then the following assertions hold true:
	\begin{itemize}
		\item[$(i)$] $\Lambda^G$ is a closed linear subspace of $\Lambda$;
		\item[$(ii)$] the Lagrange action functional $\mathcal{A}$ is $G$-equivariant, i.e.,
		\[
		\mathcal{A}(gx)=\mathcal{A}(x),\ \forall\,g\in G,\ \forall\,x\in\Lambda;
		\]
		\item[$(iii)$] the colliding set $\Delta$ is $G$-equivariant, i.e.,
		\[
		x\in\Delta\implies gx\in\Delta,\ \forall\,g\in G.
		\]
	\end{itemize}
\end{proposition}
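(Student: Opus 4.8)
The plan is to treat the three assertions separately, using in each case the explicit formula $(gx)(t)=(\rho(g)x_{\sigma(g^{-1})i}(\tau(g^{-1})t))_i$ together with the orthogonality of $\rho$ and $\tau$ and the mass condition \eqref{property:sigma}. For $(i)$ I would first observe that, for each fixed $g\in G$, the map $\Phi_g\colon\Lambda\to\Lambda$ given by $\Phi_g x=gx$ is linear in $x$: the factor $\rho(g)$ is a linear operator on $\R^d$, the index permutation $\sigma(g^{-1})$ merely rearranges components, and the time substitution $t\mapsto\tau(g^{-1})t$ acts by precomposition, all of which are linear. Next I would check that $\Phi_g$ is in fact an isometry of $\Lambda$: since $\rho(g)\in O(d)$ preserves the Euclidean norm and $\tau(g^{-1})$ is an isometry of $\mathbb{T}$, hence measure-preserving, the $H^1$-norm is preserved, the derivative being handled by the chain rule which contributes only a factor $\pm1$. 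Consequently each $\ker(\Phi_g-\mathrm{Id})$ is a closed linear subspace of $\Lambda$, and $\Lambda^G=\bigcap_{g\in G}\ker(\Phi_g-\mathrm{Id})$ is a finite intersection of such, hence itself a closed linear subspace.

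For $(ii)$, the heart of the matter, I would show pointwise in $t$ that the Lagrangian is transported by the time substitution, namely $L((gx)(t),\tfrac{d}{dt}(gx)(t))=L(x(\tau(g^{-1})t),\dot x(\tau(g^{-1})t))$, and then absorb the substitution by a change of variables in the integral. For the kinetic term, differentiating $(gx)_i(t)=\rho(g)x_{\sigma(g^{-1})i}(\tau(g^{-1})t)$ yields a factor $\rho(g)$ and a factor $\pm1$ from the affine map $\tau(g^{-1})$, both of which drop out of $\|\cdot\|^2$ because $\rho(g)$ is orthogonal; the crucial step is then to reindex the sum by $j=\sigma(g^{-1})i$ and invoke \eqref{property:sigma}, which guarantees $m_i=m_j$, so that $K$ is reproduced at time $\tau(g^{-1})t$. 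The potential term is analogous: orthogonality of $\rho(g)$ cancels in the distances $\|(gx)_i-(gx)_j\|$, and reindexing the unordered pairs together with \eqref{property:sigma} reconstructs $U(x(\tau(g^{-1})t))$. Finally I would substitute $s=\tau(g^{-1})t$ in $\int_0^T$; since $\tau(g^{-1})\in O(2)$ maps $\mathbb{T}$ bijectively onto itself with $|ds/dt|=1$, the integral is unchanged in the rotation case, while in the reflection case the orientation reversal is compensated by $T$-periodicity, giving $\mathcal{A}(gx)=\mathcal{A}(x)$.

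For $(iii)$ I would argue directly at the level of configurations. If $x\in\Delta_{ij}$, so $x_i=x_j$ with $i\neq j$, I would set $k=\sigma(g)i$ and $l=\sigma(g)j$; since $\sigma(g)=\sigma(g^{-1})^{-1}$ is a bijection and $i\neq j$, we have $k\neq l$, and by construction $\sigma(g^{-1})k=i$ and $\sigma(g^{-1})l=j$. Then $(gx)_k=\rho(g)x_i=\rho(g)x_j=(gx)_l$, so $gx\in\Delta_{kl}\subseteq\Delta$.

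The routine parts are the linearity and continuity in $(i)$ and the index bookkeeping in $(iii)$. The main obstacle, or at least the only place where care is genuinely needed, is the time substitution in $(ii)$: one must correctly track the $\pm1$ arising from $\tau(g^{-1})$ being possibly orientation-reversing, verify that the mass hypothesis \eqref{property:sigma} is exactly what licenses the reindexing of both the kinetic sum and the potential's pairwise sum, and confirm that the change of variables $s=\tau(g^{-1})t$ leaves $\int_0^T$ invariant in both the rotation and reflection cases by appealing to $T$-periodicity.
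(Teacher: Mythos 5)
Your proposal is correct and is precisely the standard argument the paper alludes to: the authors omit the proof entirely, noting only that it ``follows from the definition of group action on $\Lambda$'' and that the equivariance of $\mathcal{A}$ rests on \eqref{property:sigma}, which is exactly where you invoke it (in the reindexing of both the kinetic sum and the pairwise potential sum). Your treatment of the time substitution, including the $\pm1$ from a possibly orientation-reversing $\tau(g^{-1})$ and the bijective reindexing in $(iii)$, supplies correctly the details the paper leaves to the reader.
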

As a consequence of these facts, we deduce this version of the \emph{Palais principle of symmetric criticality}, whose proof is a slight modification of the classical one given in \cite{Pal}.

\begin{lemma}\label{lem:palais}
	Let $\mathcal{A}|_{\Lambda^G}$ be the restriction of the Lagrange action functional $\mathcal{A}$ to the $G$-equivariant loop space $\Lambda^G$. Then, a collision-less critical point $\bar{x}\in\hat\Lambda^G$ of $\mathcal{A}|_{\Lambda^G}$ is also a (collision-less) critical point of $\mathcal{A}$ over the whole $\Lambda$. 
\end{lemma}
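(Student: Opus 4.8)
The plan is to prove the Palais principle of symmetric criticality by exploiting the orthogonal-decomposition structure furnished by the projector $\pi_G$ onto $\Lambda^G$. Recall from Proposition~\ref{prop:equivariance}$(i)$ that $\Lambda^G$ is a closed linear subspace of the Hilbert space $\Lambda = H^1(\mathbb{T};\mathcal{X})$. Since the action of $G$ on $\Lambda$ is orthogonal (it is induced by orthogonal representations $\rho,\tau$ and the inner-product-preserving permutation $\sigma$, as already observed in Section~\ref{sec:sym_lambda}), the averaging map $\pi_G x \uguale \frac{1}{|G|}\sum_{g\in G} gx$ defined as in~\eqref{eq:projector} is the orthogonal projection of $\Lambda$ onto $\Lambda^G$. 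Consequently $\Lambda = \Lambda^G \oplus (\Lambda^G)^\perp$, and the first step is to identify the orthogonal complement $(\Lambda^G)^\perp$ as the range of $\mathrm{Id} - \pi_G$.

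The core of the argument is to show that, at a point $\bar{x} \in \hat{\Lambda}^G$ which is critical for the restriction $\mathcal{A}|_{\Lambda^G}$, the full gradient $\nabla\mathcal{A}(\bar x)$ vanishes. First I would note that $\mathcal{A} \in \mathcal{C}^1$ in a neighbourhood of $\bar x$, because $\bar x$ is collision-less and $\mathcal{A}\in\mathcal{C}^1(\hat\Lambda)$ as recalled in the Notations. By hypothesis, the directional derivative $d\mathcal{A}(\bar x)[v]$ vanishes for every $v\in\Lambda^G$; it therefore suffices to prove that it also vanishes for every $w\in(\Lambda^G)^\perp$. The key ingredient is the equivariance $\mathcal{A}(g x)=\mathcal{A}(x)$ from Proposition~\ref{prop:equivariance}$(ii)$: differentiating this identity at $\bar x$ (and using $g\bar x = \bar x$ since $\bar x\in\Lambda^G$) shows that the gradient $\nabla\mathcal{A}(\bar x)$ is itself $G$-invariant, i.e.\ it lies in $\Lambda^G$. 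Hence $\nabla\mathcal{A}(\bar x)$ is orthogonal to every $w\in(\Lambda^G)^\perp$, which gives $d\mathcal{A}(\bar x)[w]=0$ on the complement.

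Combining the two cases, $d\mathcal{A}(\bar x)[v]=0$ for all $v\in\Lambda^G$ and $d\mathcal{A}(\bar x)[w]=0$ for all $w\in(\Lambda^G)^\perp$, yields $d\mathcal{A}(\bar x)=0$ on all of $\Lambda=\Lambda^G\oplus(\Lambda^G)^\perp$ by linearity. Thus $\bar x$ is a critical point of $\mathcal{A}$ over the whole loop space; it remains collision-less by assumption, so it is a genuine $T$-periodic solution of~\eqref{eq:newton} by the least-action characterisation recalled after the definition of $\mathcal{A}$.

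The main obstacle I anticipate is the rigorous verification that $\pi_G$ is precisely the \emph{orthogonal} projection in the $H^1$ inner product, and equivalently that $G$ acts by isometries of $H^1(\mathbb{T};\mathcal{X})$ rather than merely of the pointwise $L^2$ structure. This requires checking that the time-reparametrisation by $\tau(g^{-1})$ preserves the $H^1$ norm: since $\tau(g)$ is an element of $O(2)$ acting on $\mathbb{T}$ as either $t\mapsto\vt+t$ or $t\mapsto\vt-t$ (cf.\ Remark~\ref{rem:tau_O(2)}), it is a measure-preserving diffeomorphism of the circle, so the chain rule shows both $\int|x|^2$ and $\int|\dot x|^2$ are invariant, possibly up to the sign introduced by a reflection in the $\dot x$ term, which squares away. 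Once this isometry property is secured, the decomposition $\Lambda=\Lambda^G\oplus(\Lambda^G)^\perp$ with $\pi_G$ orthogonal is immediate, and the rest of the argument is formal. This is exactly the point where the infinite-dimensional setting could hide a subtlety, but the finiteness of $G$ and the smoothness of the circle action make it tractable.
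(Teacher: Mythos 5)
Your argument is correct and is essentially the classical proof of the symmetric criticality principle that the paper invokes without reproducing (it only cites Palais): since $G$ acts on $\Lambda$ by linear $H^1$-isometries, the averaging map is the orthogonal projection onto the closed subspace $\Lambda^G$, the $G$-equivariance of $\mathcal{A}$ forces $\nabla\mathcal{A}(\bar x)\in\Lambda^G$, and criticality of the restriction then kills the remaining component. The only point worth making explicit is that the isometry holds for the \emph{unweighted} $H^1$ inner product (or the mass-weighted one under \eqref{property:sigma}), which is all that is needed since the notion of critical point is independent of the choice of equivalent inner product.
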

Now, we introduce the set of equivariant configurations
\[
	\mathcal{X}^G\uguale\left\lbrace x\in\mathcal{X}:\,\rho(g)x_{\sigma(g^{-1})i}=x_i,\ \forall\,i=1,\ldots,n\right\rbrace
\]
and recall this crucial result contained in \cite[Proposition 4.1]{FT2003}
\begin{proposition}\label{prop:coercive}
	Assume that $\mathcal{A}|_{\Lambda^G}$ is not identically $+\infty$. Then
	\[
	\mathcal{A}|_{\Lambda^G} \text{ is coercive } \iff \mathcal X ^G = \{0\}.
	\]
	As a consequence, if $\mathcal{X}^G=\{0\}$, there exists at least a minimiser of $\mathcal{A}$ in $\Lambda^G$.
\end{proposition}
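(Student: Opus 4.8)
The plan is to reduce the whole equivalence to one structural fact: \emph{the time-average of a $G$-equivariant loop is itself a $G$-equivariant configuration}. Given $x\in\Lambda^G$, set its mean $\bar x\uguale\frac1T\int_0^T x(t)\,dt\in\mathcal X$. Starting from the equivariance relation $\rho(g)x_{\sigma(g^{-1})i}(\tau(g^{-1})t)=x_i(t)$ and integrating in $t$ over $\mathbb T$, I would perform the change of variable $s=\tau(g^{-1})t$, which is measure-preserving: by Remark \ref{rem:tau_O(2)} each $\tau(g^{-1})$ acts on $\mathbb T$ either as a shift or as a reflection, and both preserve the arc-length element. The reparametrisation then disappears and one is left with $\rho(g)\bar x_{\sigma(g^{-1})i}=\bar x_i$ for all $g$ and $i$, i.e. $\bar x\in\mathcal X^G$. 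I expect this averaging identity to be the genuine heart of the proposition, and the only place where the precise action on the time circle enters.

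With this in hand the implication $\mathcal X^G=\{0\}\Rightarrow$ coercivity is short. If $\mathcal X^G=\{0\}$ then every $x\in\Lambda^G$ has $\bar x=0$, so the Poincar\'e--Wirtinger inequality gives $\|x\|_{L^2}^2\le C\|\dot x\|_{L^2}^2$ with $C=T^2/4\pi^2$. Since $U\ge 0$ and the masses are positive, $\mathcal A(x)\ge\int_0^T K(\dot x)\,dt=\frac12\sum_i m_i\|\dot x_i\|_{L^2}^2\ge c_0\|\dot x\|_{L^2}^2\ge c_1\|x\|_{H^1}^2$ for constants $c_0,c_1>0$, whence $\mathcal A|_{\Lambda^G}(x)\to+\infty$ as $\|x\|_{H^1}\to+\infty$.

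For the converse I would argue by contraposition. Assuming $\mathcal X^G\neq\{0\}$, fix $0\neq c\in\mathcal X^G$; viewed as a constant loop, $c\in\Lambda^G$. Since $\mathcal A|_{\Lambda^G}$ is not identically $+\infty$, pick $y\in\Lambda^G$ with $\mathcal A(y)<+\infty$ and consider the line $y+\la c\in\Lambda^G$, which stays in $\Lambda^G$ because the latter is a linear subspace (Proposition \ref{prop:equivariance}). Its $H^1$-norm diverges as $\la\to+\infty$, while $\frac{d}{dt}(y+\la c)=\dot y$ keeps the kinetic term fixed, and a direct estimate keeps $\int_0^T U(y+\la c)\,dt$ bounded: for pairs with $c_i\neq c_j$ the mutual distance grows like $\la$ and the corresponding terms of $U$ are $O(1/\la)$ uniformly in $t$, whereas for pairs with $c_i=c_j$ the terms are unchanged and already integrable since $\mathcal A(y)<+\infty$. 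Thus $\mathcal A(y+\la c)$ stays bounded along a norm-diverging path and the functional is not coercive. I flag this step as the main subtlety: the naive choice $x=\la c$ fails whenever $c$ is itself a collision configuration (then $\mathcal A(\la c)\equiv+\infty$), which is precisely why one perturbs a genuinely finite-action loop $y$ rather than scaling $c$ alone.

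Finally, the existence of a minimiser follows from the direct method. Coercivity bounds any minimising sequence in $H^1$; $\Lambda^G$ is a closed linear subspace (Proposition \ref{prop:equivariance}), hence weakly closed, so a subsequence satisfies $x_k\wconv\bar x\in\Lambda^G$. The functional $\mathcal A$ is weakly lower semicontinuous: the kinetic part because it is convex in $\dot x$, and the potential part because $x_k\wconv\bar x$ in $H^1$ forces uniform convergence through the compact embedding $H^1(\mathbb T)\hookrightarrow C^0(\mathbb T)$, after which $U(x_k(t))\to U(\bar x(t))$ pointwise in $[0,+\infty]$ and Fatou's lemma yields $\int_0^T U(\bar x)\,dt\le\liminf_k\int_0^T U(x_k)\,dt$. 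Hence $\mathcal A(\bar x)\le\inf_{\Lambda^G}\mathcal A$, and $\bar x$ is the desired minimiser (possibly with collisions, which this statement does not exclude).
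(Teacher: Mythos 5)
Your proof is correct. The paper itself does not prove this statement: it simply recalls it from \cite[Proposition 4.1]{FT2003}, so there is no in-paper argument to compare against; your write-up is essentially the standard proof from that reference. The key averaging identity (the time-mean of a $G$-equivariant loop lies in $\mathcal X^G$, because $\tau(g^{-1})$ acts on $\mathbb T$ by a measure-preserving isometry), the Poincar\'e--Wirtinger estimate for the forward implication, and the translation $y+\la c$ along a nonzero $c\in\mathcal X^G$ of a finite-action loop for the converse are exactly the right ingredients, and your remark that one must perturb a finite-action loop rather than scale $c$ alone (to avoid $c$ being a collision configuration) correctly identifies the one genuine subtlety. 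The direct-method argument for the existence of a minimiser (weak closedness of $\Lambda^G$ from Proposition \ref{prop:equivariance}, convexity of the kinetic part, compact embedding into $C^0$ plus Fatou for the potential part) is also complete.
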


\subsection{Reducible, bound to collisions and admissible actions on $\Lambda$}

It is convenient to refine the choice of a finite group $G$ with respect to its action on the periodic loops in $\Lambda$, in order to rule out those representations of $G$ which lead to \emph{reducible} group actions. This process, described in the present section, will yield a definition of \emph{admissible} group action on $\Lambda$. All the results and definitions of this paragraph have been introduced in \cite{FT2003}, or follows as direct consequences of such results. We will add some proofs for the reader's convenience. 

\begin{definition}\label{def:non_reducible}
	Consider a finite group $G$ with orthogonal representations $\rho\colon G\to O(d)$, $\tau\colon G\to O(2)$ and $\sigma\colon G\to\Sigma_n$ satisfying \eqref{property:sigma}. We say that the action of $G$ on $\Lambda$ is \emph{non-reducible} if the following assumptions are satisfied:
	\begin{itemize}
		\item[$(i)$] $\ker\tau\cap\ker\rho\cap\ker\sigma=1$;
		\item[$(ii)$] there is no proper linear subspace $V$ of $\R^d$ such that
		\[
		x_i(t)\in V,\quad\forall\,i\in\{1,\ldots,n\},\ \forall\,x\in\Lambda^G,\ \forall\,t\in\mathbb{T};
		\]
		\item[$(iii)$] there is no integer $k\neq\pm1$ such that
		\[
		\forall\,x\in\Lambda^G\ \exists\,y\in\Lambda\ \text{such that}\ x(t)=y(kt),\ \forall\,t\in\mathbb{T}.
		\]
	\end{itemize}
\end{definition}

\begin{remark}
	We can motivate the choices in the previous definition in this way:
	\begin{itemize}
		\item[$(i)$] If $K=\ker\tau\cap\ker\rho\cap\ker\sigma\neq 1$ then it is enough to consider $G'=G/K$ instead of $G$, since in this case $\Lambda^{G'}=\Lambda^G$ up to homeomorphisms.
		\item[$(ii)$] If $V$ is a proper subspace of $\R^d$ then $\dim V< d$ and one could change the representation $\rho$ of $G$.
		\item[$(iii)$] This assumption simply means that $T$ is the minimal period for equivariant loops. 
	\end{itemize}
\end{remark}

We can actually determine some situations in which the action of a group $G$ is reducible. 

\begin{proposition}\label{prop:reducible}
	Let $G$ be a finite group acting on $\Lambda$. If one of the following holds
	\begin{itemize}
		\item[$(a)$] $\ker\tau\cap\ker\sigma\neq 1$
		\item[$(b)$] $|\ker\rho\cap\ker\sigma|>2$
	\end{itemize}
	then the action of $G$ is reducible. 
\end{proposition}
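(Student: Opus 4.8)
The plan is to argue by contraposition against Definition~\ref{def:non_reducible}: the action is reducible precisely when at least one of the three conditions $(i)$, $(ii)$, $(iii)$ defining non-reducibility fails. So for each hypothesis $(a)$ and $(b)$ I would exhibit a nontrivial element of a suitable kernel intersection, substitute it into the equivariance relation $\rho(g)x_{\sigma(g^{-1})i}(\tau(g^{-1})t)=x_i(t)$ valid for every $x\in\Lambda^G$, and read off which of $(i)$--$(iii)$ collapses. The guiding observation is that elements lying in two of the three kernels make the corresponding equivariance condition degenerate into a constraint purely on the third datum (space, time, or labels).

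For hypothesis $(a)$ I would pick $g\neq 1$ in $\ker\tau\cap\ker\sigma$. Since then $\tau(g^{-1})t=t$ and $\sigma(g^{-1})i=i$, the equivariance relation reduces to $\rho(g)x_i(t)=x_i(t)$ for all $i$ and $t$, so every trajectory point lies in the fixed subspace $V=\ker(\rho(g)-\mathrm{Id}_d)$ of $\rho(g)$. I then split into two cases. If $\rho(g)=\mathrm{Id}_d$, then $g\in\ker\tau\cap\ker\rho\cap\ker\sigma$ is nontrivial and condition $(i)$ fails. Otherwise $\rho(g)\neq\mathrm{Id}_d$ is a nontrivial orthogonal map, so $V$ is a proper linear subspace of $\R^d$ containing all the $x_i(t)$, and condition $(ii)$ fails. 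Either way the action is reducible.

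For hypothesis $(b)$ I would set $H\uguale\ker\rho\cap\ker\sigma$ with $|H|>2$. If $\ker\tau\cap H\neq 1$ then condition $(i)$ fails and we are done, so I may assume $\tau|_H$ is injective, whence $\tau(H)$ is a subgroup of $O(2)$ of order $>2$. For $g\in H$ the equivariance relation becomes the pure time-symmetry $x_i(\tau(g^{-1})t)=x_i(t)$, so every loop in $\Lambda^G$ is invariant under the time-action of $\tau(H)$. By the classification of finite subgroups of $O(2)$ (Proposition~\ref{prop:subgroups_orthogonal}), $\tau(H)$ is cyclic or dihedral; in either case its rotation subgroup $C=\tau(H)\cap SO(2)$ satisfies $|C|\geq|\tau(H)|/2>1$, hence has order $m\geq 2$. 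Identifying $C$ with a cyclic group of rotations of the time circle $\mathbb{T}=\R/T\Z$, its generator corresponds to the time-shift $t\mapsto t+T/m$, and invariance forces every $x\in\Lambda^G$ to have period $T/m$. I would then set $y(s)\uguale x(s/m)$, a well-defined $T$-periodic $H^1$ loop with $x(t)=y(mt)$; taking $k=m\neq\pm1$ shows that condition $(iii)$ fails, so the action is reducible.

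The routine verifications --- that the fixed set of a nontrivial orthogonal map is a proper subspace, and that the reparametrised loop $y$ genuinely lies in $\Lambda$ --- are immediate. The only step requiring care is the one in $(b)$ translating the abstract rotation subgroup $C\subset SO(2)$ into a concrete time-shift of length $T/m$ on $\R/T\Z$ and verifying that $C$-invariance yields the strictly smaller period $T/m$; this is where the abuse of notation from Remark~\ref{rem:tau_O(2)} (treating angles and times on the same circle) must be handled correctly to justify the $k=m$ rescaling. I expect this to be the main, though minor, obstacle.
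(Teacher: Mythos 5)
Your proof is correct and follows essentially the same route as the paper's: part $(a)$ is the paper's argument verbatim (the fixed subspace of $\rho(g)$ violates $(i)$ or $(ii)$), and part $(b)$ likewise reduces to producing a nontrivial rotation in $\tau(\ker\rho\cap\ker\sigma)$ that forces every equivariant loop to have period strictly smaller than $T$, violating $(iii)$. The only cosmetic difference is that the paper obtains that rotation by picking two elements of $(\ker\rho\cap\ker\sigma)\setminus\ker\tau$ and composing them when both act as reflections, whereas you extract it as the rotation subgroup of $\tau(H)$ via the classification of finite subgroups of $O(2)$; your write-up of $(b)$ is, if anything, a little more complete than the paper's, whose proof trails off mid-sentence.
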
 
\begin{proof}
	Assume ($a$). Let $g\in(\ker\tau\cap\ker\sigma)\setminus\{1\}$. If $g\in\ker\rho$ then the action is reducible since $(i)$ in Definition \ref{def:non_reducible} fails. Otherwise, for $x\in\Lambda^G$ we have
	\[
	\rho(g)x_i(t)=x_i(t),\ \forall\,t\in\mathbb{T},\ \forall\,i\in\{1,\ldots,n\}
	\]
	and so $x_i(t)$ belongs to the proper subspace $V^g=\{v\in\R^d:\,\rho(g)v=v\}=\ker\rho(g)$; this is against $(ii)$ in Definition \ref{def:non_reducible}.
	
	Assume ($b$). Let $g_0,g_1\in(\ker\rho\cap\ker\sigma)\setminus\ker\tau$. Since $\tau(G)$ is a finite subgroup of $O(2)$, then $g_0$ and $g_1$ act as rotations or refections on $\mathbb{T}$ (see Remark \ref{rem:tau_O(2)}). If both act as reflections, their composition $\hat{g}\in \ker\rho\cap\ker\sigma$ and acts as a rotation. Then, if we consider $x\in\Lambda^G$, the equivariance with respect to $\hat{g}$ reads
	\[
	x_i(t)=x_i(t+\hat\vt),\quad\forall\,t\in\mathbb{T},\ i=1,\ldots,n,
	\]
	for some $\hat{\vt}\in\R/{T\Z}$. Since $x$ is a $T$-periodic loop, the previous relation implies that $T=k\hat{\vt}$ for some $k\in\Z\setminus\{0,\pm1\}$. In this case, $T$ would not be the minimal period for $x\in\Lambda^G$, and this would lead again to a reducible action of $G$ for condition $(iii)$ of Definition \ref{def:non_reducible}. Clearly 
\end{proof}

\begin{remark}\label{rem:brake}
	Note that if $|\ker\rho\cap\ker\sigma|=2$ the action of $G$ is not necessarily reducible. Indeed, the element $g\in(\ker\rho\cap\ker\sigma)\setminus\{1\}$ may act as a reflection, which would not imply property $(iii)$ of Definition \ref{def:non_reducible}.
\end{remark}

Since we are looking for classical solutions of the $n$-body problem, it is important to know that the action of some groups $G$ can make $\hat\Lambda^G$ an empty set, and \emph{forcing} the bodies to collide. 

\begin{definition}
	We say that the action of $G$ on $\Lambda$ is \emph{bound to collisions} if 
	\[
	\forall\,x\in\Lambda^G\ \text{it holds}\ x^{-1}\Delta\neq\emptyset,
	\]
	or, equivalently, if
	\[
	\hat{\Lambda}^G=\emptyset.
	\]
\end{definition}

The next easy result provides a necessary condition for $\hat{\Lambda}^G$ to be non empty. 

\begin{proposition}\label{prop:btc}
	Let $G$ be a finite group with non-reducible action on $\Lambda$. Then, if $\ker\tau\cap\ker\rho\neq 1$, the action of $G$ is bound to collisions. 
\end{proposition}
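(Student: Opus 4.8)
The plan is to unwind the definition of \emph{bound to collisions}: since it is equivalent to $\hat{\Lambda}^G=\emptyset$, I would fix an arbitrary $x\in\Lambda^G$ and show that it must collide — in fact that its entire trajectory lies in $\Delta$. The starting point is the hypothesis $\ker\tau\cap\ker\rho\neq 1$, which lets me pick an element $g\in(\ker\tau\cap\ker\rho)\setminus\{1\}$. The key observation is that such a $g$ acts trivially on both the space factor $\R^d$ (through $\rho$) and the time circle $\mathbb{T}$ (through $\tau$), so that the only nontrivial ingredient of its action on $\Lambda$ comes from the permutation $\sigma(g)$.

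First I would use non-reducibility to guarantee that $\sigma$ really does move the bodies. By condition $(i)$ of Definition \ref{def:non_reducible} we have $\ker\tau\cap\ker\rho\cap\ker\sigma=1$; since $g\neq 1$ lies in $\ker\tau\cap\ker\rho$, it cannot also lie in $\ker\sigma$, hence $\sigma(g)\neq\mathrm{id}$ and likewise $\sigma(g^{-1})\neq\mathrm{id}$. This is the only place where the non-reducibility assumption enters, and it is exactly what rules out the degenerate case in which $g$ would act as the identity on all of $\Lambda$.

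Next I would write out the equivariance constraint $(gx)(t)=x(t)$ componentwise. Using $(gx)_i(t)=\rho(g)\,x_{\sigma(g^{-1})i}(\tau(g^{-1})t)$ together with $\rho(g)=\mathrm{Id}$ and $\tau(g^{-1})=\mathrm{Id}$, the constraint collapses to the purely combinatorial identity
\[
x_{\sigma(g^{-1})i}(t)=x_i(t),\qquad\forall\,t\in\mathbb{T},\ \forall\,i\in\{1,\ldots,n\}.
\]
Since $\sigma(g^{-1})$ is a nontrivial permutation, there is an index $i$ with $\sigma(g^{-1})i=j\neq i$; for this pair the identity reads $x_i(t)=x_j(t)$ for every $t$, i.e.\ $x(t)\in\Delta_{ij}\sset\Delta$ for all $t\in\mathbb{T}$. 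Thus $x^{-1}\Delta=\mathbb{T}\neq\emptyset$, so $x\notin\hat{\Lambda}$. As $x\in\Lambda^G$ was arbitrary, $\hat{\Lambda}^G=\emptyset$ and the action is bound to collisions.

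The argument is short and I do not expect a genuine obstacle; the only point requiring care is the bookkeeping of inverses (checking that $\tau(g^{-1})$ and $\rho(g)$ are indeed trivial, and that $\sigma(g^{-1})$ inherits nontriviality from $\sigma(g)$) together with the clean invocation of condition $(i)$ of non-reducibility, without which the chosen $g$ could act as the identity of the whole action and the conclusion would fail.
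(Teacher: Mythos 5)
Your argument is correct and is essentially the paper's own proof: pick $g\in(\ker\tau\cap\ker\rho)\setminus\{1\}$, use condition $(i)$ of non-reducibility to force $\sigma(g)\neq\mathrm{id}$, and read off from the equivariance that two distinct bodies coincide for all time. Your write-up merely spells out the bookkeeping of inverses that the paper leaves implicit.
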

\begin{proof}    
    If $g\in(\ker\tau\cap\ker\rho)\setminus\{1\}$ then $\sigma(g)$ has to be non-trivial for condition $(i)$ of Definition \ref{def:non_reducible} and thus, if $x\in\Lambda^G$, we have that
	\[
	x_{\sigma(g^{-1})i}(t)=x_i(t),\ \forall\,t\in\mathbb{T},\ i=1,\ldots,n,
	\]
	which proves that any $G$-equivariant loop is a collision loop.
\end{proof}

	To conclude, in order to exclude those finite groups which \emph{surely} induce a reducible or bound to collisions action, we may further assume the following hypotheses on $G$, motivated by Propositions \ref{prop:reducible}-\ref{prop:btc} and give a definition of \emph{admissible} group action on $\Lambda$. We notice that an admissible action is not necessarily non reducible, nor not bound to collisions. Nonetheless, the properties required in the admissibility are easy to check and refine the choice of the groups $G$ we are interested in.

\begin{definition}[Admissible action]\label{def:admissible_action}
	We say that the action of a finite group $G$ on $\Lambda$ is \emph{admissible} if:
        \begin{itemize}
	    \item $\ker\tau\cap\ker\sigma=1$. This shows in particular that $G$ is a subgroup of $O(2)\times\Sigma_n$ through canonical isomorphism considering the product homomorphism $(\tau,\sigma)$;
     	\item $|\ker\rho\cap\ker\sigma|\leq 2$ and the non-trivial element in $\ker\rho\cap\ker\sigma$ acts as a reflection in time;
    	\item $\ker\tau\cap\ker\rho=1$. Again this shows that $G$ is a subgroup of $O(2)\times O(d)$.
\end{itemize}
\end{definition}

\subsection{Classification of the group action on $\Lambda$ and fundamental domain}

From now on, we assume that $G$ generates an admissible action on $\Lambda$ as described in Definition \ref{def:admissible_action}. At this point we aim to give a classification of finite groups which is based on the \emph{type of action} they provide on $\Lambda$. Actually, since $\tau(G)$ is a finite subgroup of $O(2)$, Proposition \ref{prop:subgroups_orthogonal} and Remark \ref{rem:tau_O(2)} suggest that the easiest and most significant classification has to take into account the action that $G$ yields on $\mathbb{T}$. In particular, $G/\ker\tau$ is itself a finite subgroup of $O(2)$ and it is useful to see how the action of $G$ on $\mathcal{X}$ is related to the one of $G/\ker\tau$ on $\mathcal{X}^{\ker\tau}$. To start with, let us define 
\[
	\bar{G}\uguale G/{\ker\tau}. 
\]
We recall that $G$ acts on the configuration space $\mathcal{X}$ through the representations $\rho\colon G\to O(d)$ and $\sigma\colon G\to\Sigma_n$. Since $\ker\tau\lhd G$, by Remark \ref{rem:weyl} we see that $\bar{G}$ acts on the fixed space $\mathcal{X}^{\ker\tau}$. The quotient $\bar{G}$ is a useful refinement of the whole group $G$, mostly because it is a subgroup of $O(2)$. This makes more convenient to work with $\bar{G}$-\emph{equivariant} loops which live in $\mathcal{X}^{\ker\tau}$ and this turns out to be also reasonable thanks to the next result. 
\begin{proposition}\label{prop:homom_kertau}
	The $H^1$ loop spaces 
	\[
	\Lambda^G=H^1(\mathbb{T};\mathcal{X})^G,\quad \Omega^{\bar{G}}\uguale H^1(\mathbb{T};\mathcal{X}^{\ker\tau})^{\bar{G}}
	\]		
	coincide pointwise.
\end{proposition}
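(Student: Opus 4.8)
The plan is to prove the set equality $\Lambda^G = \Omega^{\bar G}$ by a double inclusion, after observing that both spaces consist of $H^1$ loops into $\mathcal{X}$ and that the claimed identification is literally the identity map on functions—this is what the word "pointwise" is meant to emphasise. The guiding idea is that $G$-equivariance factors into two independent pieces: equivariance with respect to the normal subgroup $\ker\tau$, which acts \emph{trivially} on the time circle $\mathbb{T}$, and the residual equivariance indexed by the quotient $\bar G = G/\ker\tau$.

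First I would analyse the $\ker\tau$ part. For $h\in\ker\tau$ one has $\tau(h^{-1})=\mathrm{Id}$, so $(hx)(t)=(\rho(h)x_{\sigma(h^{-1})i}(t))_i$; the time variable is untouched. Consequently the requirement $hx=x$ for all $h\in\ker\tau$ is equivalent to the pointwise condition $\rho(h)x_{\sigma(h^{-1})i}(t)=x_i(t)$ for every $t$, $i$ and $h$, which is exactly the statement that $x(t)\in\mathcal{X}^{\ker\tau}$ for all $t$. Since $\mathcal{X}^{\ker\tau}$ is a linear subspace of $\mathcal{X}$ (being the fixed space of an orthogonal linear action), this already identifies the loops of $\Lambda^G$ as those elements of $H^1(\mathbb{T};\mathcal{X}^{\ker\tau})$ that moreover satisfy $gx=x$ for all $g\in G$.

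Next I would show that, on loops already valued in $\mathcal{X}^{\ker\tau}$, the condition ``$gx=x$ for all $g\in G$'' coincides with $\bar G$-equivariance. The key check—and the only genuinely nontrivial step—is that the $\bar G$-action on $\mathcal{X}^{\ker\tau}$, and hence on $\Omega^{\bar G}$, is well defined, i.e.\ independent of the coset representative. Concretely, for $g'=gh$ with $h\in\ker\tau$ one has $\tau(g'^{-1})=\tau(g^{-1})$, so the time actions agree, while for the spatial part $\rho(g')x_{\sigma(g'^{-1})i}=\rho(g)\rho(h)x_{\sigma(h^{-1})\sigma(g^{-1})i}$; setting $j=\sigma(g^{-1})i$ and invoking the fixed-point identity $\rho(h)x_{\sigma(h^{-1})j}=x_j$ valid for $x\in\mathcal{X}^{\ker\tau}$ collapses this back to $\rho(g)x_{\sigma(g^{-1})i}$. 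This is precisely the content behind Remark \ref{rem:weyl} in the present setting.

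Granting well-definedness, both inclusions become immediate unwindings of the definitions. If $x\in\Lambda^G$, the first step places its image in $\mathcal{X}^{\ker\tau}$, and for any $\bar g\in\bar G$ the action $\bar g x$ computed through any representative $g$ equals $gx=x$, so $x\in\Omega^{\bar G}$. Conversely, if $x\in\Omega^{\bar G}$ then $x$ is valued in $\mathcal{X}^{\ker\tau}$ and for every $g\in G$ we have $gx=\bar g x=x$, whence $x\in\Lambda^G$. I expect the main obstacle to be purely bookkeeping: carefully tracking how $\rho$ and $\sigma$ interact with the coset structure to verify the representative-independence above. Everything else follows directly from the definitions of the $G$- and $\bar G$-actions.
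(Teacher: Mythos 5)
Your proof is correct and follows essentially the same route as the paper: both arguments first observe that equivariance under $\ker\tau$ (which fixes time) forces $x(t)\in\mathcal{X}^{\ker\tau}$, and then carry out the same coset computation — writing $g'=gh$ with $h\in\ker\tau$, relabelling $j=\sigma(g^{-1})i$ and $s=\tau(g^{-1})t$, and using the fixed-point identity on $\mathcal{X}^{\ker\tau}$ — to pass between $G$- and $\bar G$-equivariance. Your framing of that computation as well-definedness of the $\bar G$-action on coset representatives is just a mild repackaging of what the paper does directly.
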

\begin{proof}
	As a first remark, if $x\in\Lambda^G$ then $x(t)\in\mathcal{X}^{\ker\tau}$ for any $t\in\mathbb{T}$. Indeed, the $G$-equivariance of an element $x\in\Lambda^G$ implies in particular that
	\[
	hx_{h^{-1}i}(h^{-1}t)=x_i(t),\qquad\forall\,i\in\{1,\ldots,N\},\ \forall\,t\in\mathbb{T},
	\]
	and for any $h\in\ker\tau$, but obviously $h^{-1}t=t$. Moreover, the $G$-equivariance of $x$ clearly implies the $\bar{G}$-equivariance, so $\Lambda^G\sset\Omega^{\bar{G}}$.
	
    Assume now that $x\in\Omega^{\bar{G}}$; then, for any $g\in G$, $h\in\ker\tau$ we have that
    \begin{equation}\label{eq:equiv_ker_tau}
    \rho(gh)x_{\sigma((gh)^{-1})}(\tau((gh)^{-1})t)=x_i(t)
    \end{equation}
    or
    \[
    \rho(g)\rho(h)x_{\sigma(h^{-1})\sigma(g^{-1})}(\tau(h^{-1})\tau(g^{-1})t)=x_i(t).
    \]
    Now, if we call $s=\tau(g^{-1})t$ and $j=\sigma(g^{-1})i$ we can rewrite the left hand side as
    \[
    \rho(g)\rho(h)x_{\sigma(h^{-1})j}(\tau(h^{-1})s).
    \]
    Now, recall that $x(t)\in\mathcal{X}^{\ker\tau}$ for any $t\in\mathbb{T}$ and that $h\in\ker\tau$, so that $\tau(h^{-1})s=s$. Therefore, we can rewrite the previous expression as
    \[
    \rho(g)x_j(s)
    \]
    and now, since $j=\sigma(g^{-1})i$ and $s=\tau(g^{-1})t$ we have actually proved that \eqref{eq:equiv_ker_tau} holds if and only if
    \[
    \rho(g)x_{\sigma(g^{-1})i}(\tau(g^{-1})t)=x_i(t),\quad\text{for any}\ g\in G,
    \]
    and so $x\in\Lambda^G$.
\end{proof}
Note that, since $\bar{G}$ is a finite subgroup of $O(2)$, then Proposition \ref{prop:subgroups_orthogonal} guarantees that $\bar{G}$ is either \emph{cyclic} or \emph{dihedral}. This motivates the following definition.

\begin{definition}
	We classify the action of a finite group $G$ on $\Lambda$ in this way:
	\begin{itemize}
		\item if the quotient $\bar{G}$ acts trivially on the orientation of $\mathbb{T}$, then $\bar{G}$ is cyclic and we say that the action of $G$ on $\Lambda$ is of \emph{cyclic type}. In this case, $\bar{G}$ is a subgroup of $SO(2)$ and only contains rotations;
		\item if $\bar{G}$ is made by a single reflection on $\mathbb{T}$, then we say that the action of $G$ on $\Lambda$ is of \emph{brake type}. In particular, $\bar{G}$ is a cyclic group of order 2;
		\item if none of the previous is satisfied we say that the action of $G$ on $\Lambda$ is of \emph{dihedral type} and $\bar{G}$ is a dihedral subgroup of $O(2)$.
	\end{itemize}
\end{definition}
The peculiarity of symmetric loops is that, once the group $G$ is fixed, it is possible to detect a sub-interval of $\mathbb{T}$ which characterises the entire loop. This is a fundamental step when one looks for critical points of $\mathcal{A}$, since it allows to switch from the analysis of periodic loops to the one of fixed-ends paths which share initial and final conditions fixed on the extrema of such subinterval. For this reason, we firstly give the following definition. 
\begin{definition}
	The isotropy subgroups of the action of $G$ on $\mathbb{T}$ are the sets
	\[
		G_t=\{g\in G:\,\tau(g)t=t\},\quad \text{for}\ t\in\mathbb{T},
	\]
	and are called the $\mathbb{T}$\emph{-isotropy subgroups} of $G$.
\end{definition}

From the previous definition, the $\mathbb{T}$-isotropy subgroups of $G$ correspond to the isotropy subgroups of the action of $\bar{G}$ on $\mathbb{T}$. This information is very useful, since we can classify again the $\mathbb{T}$-isotropy subgroups of $G$ as subgroups of $O(2)$.
\begin{remark}\label{rem:isotropy_subgroups}
	The number of distinct $\mathbb{T}$-isotropy subgroups completely determines the action type of $G$. At first, notice that $\ker\tau$ is always a $\mathbb{T}$-isotropy subgroup of $G$. Moving to the action of $\bar{G}$, observe that any $\bar{G}_t$ cannot contain rotations. Denoting by $l$ the number of distinct $\mathbb{T}$-isotropy subgroups, we obtain this characterisation:
	\begin{itemize}
		\item $l=1$, i.e., the maximal $\mathbb{T}$-isotropy subgroup is $\ker\tau$ $\iff$ the action is \emph{cyclic};
		\item $l=2$, i.e., the $\mathbb{T}$-isotropy subgroups are $\ker\tau$ and a maximal one $\iff$ the action is of \emph{brake type}. In this case, the maximal $\mathbb{T}$-isotropy subgroup coincides with $G$;
		\item $l\geq 3$, i.e., the $\mathbb{T}$-isotropy subgroups are either maximal or $\ker\tau$ $\iff$ the action is of \emph{dihedral type}. In this case, every maximal $\mathbb{T}$-isotropy subgroup is cyclic of order 2. This can be seen by working with the isotropy subgroups of the action of $\bar{G}$ and observing that no rotations can belong to any $\bar{G}_t$. Note that the product of two reflections is a rotation. 
	\end{itemize}
\end{remark}

We now define the already mentioned sub-interval of $\mathbb{T}$.

\begin{definition}[Fundamental domain]\label{def:fundamental_domain}
	The \emph{fundamental domain} $\mathbb{I}\subset\mathbb{T}$ of the action of $G$ on $\mathbb{T}$ is defined as follows:
	\begin{itemize}
		\item if the action of $G$ is cyclic, then $\mathbb{I}$ is the closed interval which connects the instant $t=0$ with its image $\tau(g^{-1})0$, where $g$ is the cyclic generator of $\bar{G}$;
		\item  if the action of $G$ is brake or dihedral, then $\mathbb{I}$ is a closed interval whose extrema are two distinct elements of $\mathbb{T}$ generating two distinct maximal $\mathbb{T}$-isotropy subgroups, such that no other instants related to maximal $\mathbb{T}$-isotropy subgroups are included in $\mathbb{I}$. Note that, if the action is of brake type, the unique maximal $\mathbb{T}$-isotropy subgroup is $G$ itself. 
	\end{itemize}
    
    \noindent In particular, there are $\vert\bar{G}\vert$ of such domains and we have
    \[
    \mathbb{T}=\bigcup_{[g]\in\bar G}\tau(g^{-1})\mathbb{I}\quad\text{and}\quad \vert\mathbb{I}\vert=\frac{\vert\mathbb{T}\vert}{\vert\bar{G}\vert}.
    \]
\end{definition} 

The previous definition may seem rather technical and difficult to picture, but the following example should clarify its important meaning. 

\begin{example}[Fundamental domain of $D_6$]
	\label{ex:d6_2}
	Recalling that $D_6$ is generated by a rotation $r$ and a reflection $s$ (see Example \ref{ex:d6_1}), we kwnow that
	\[
	D_6=\left\lbrace 1,s,r,r^2,sr,sr^2\right\rbrace.
	\]
	In order to determine a fundamental domain $\mathbb{I}$, we need to detect first the the $\mathbb{T}$-isotropy subgroups of $D_6$. We observe that the rotations $r,r^2$ do not fix any element of $\mathbb{T}$. Direct computations show that within a period of $2\pi$:
    \begin{itemize}
    \item $s$ fixes the instants $t=\frac{\pi}{4},\frac{5\pi}{4}$;
    \item $sr$ fixes the instants $t=\frac{11\pi}{12},\frac{23\pi}{12}$;
    \item $sr^2$ fixes the instants $t=\frac{7\pi}{12},\frac{19\pi}{12}$. 
    \end{itemize} 
    We conclude that $D_6$ has three distinct non-trivial $\mathbb{T}$-isotropy subgroups
    \begin{itemize}
    	\item $G_{\frac{\pi}{4}}=G_{\frac{5\pi}{4}}=\langle s\rangle $
    	\item $G_{\frac{11\pi}{12}}=G_{\frac{23\pi}{12}}=\langle sr\rangle $
    	\item $G_{\frac{7\pi}{12}}=G_{\frac{19\pi}{12}}=\langle sr^2\rangle $
    \end{itemize}
    which are cyclic of order 2. Following Definition \ref{def:fundamental_domain}, we see that for instance
    \[
    \mathbb{I}=\left[\dfrac{\pi}{4},\dfrac{7\pi}{12}\right],\quad |\mathbb{I}|=\frac{2\pi}{6}.
    \]
\end{example}

\section{ $\boldsymbol G$-equivariant critical points of the action functional}\label{sec:optimisation}

This final section explains the numerical optimisation process which has been prepared until now. We recall that we want to optimise the action functional
\[
	\mathcal{A}(x)=\frac12\int_0^T \sum\limits_{i=1}^nm_i\|\dot{x}_i(t)\|^2\,dt +\int_0^T\sum\limits_{i<j}\frac{m_im_j}{\|x_i(t)-x_j(t)\|}\,dt
\]
over the symmetric loop space $\Lambda^G$. In particular, we firstly present the theoretical approach which permits to switch from the optimisation of $\mathcal{A}$ on $\Lambda^G$ to a suitable \emph{fixed-ends} problem depending on the symmetry. Secondly, we will deepen the numerical procedures which produce $G$-equivariant periodic orbits for the symmetrical $n$-body problem. We start with this first result.

\begin{proposition}\label{prop:homo_fund_domain}
	Consider a finite group $G$ and the usual representations $\rho,\tau$ and $\sigma$. Let $\mathbb{I}=[t_0,t_1]$ be the fundamental domain of the action of $G$.
	\begin{itemize}
		\item[$(i)$] If the action of  $G$ is brake or dihedral, let $H_0$ and $H_1$ be the two maximal $\mathbb{T}$-isotropy subgroups associated respectively with $t_0$ and $t_1$. Then, the space $\Omega^{\bar{G}}=H^1(\mathbb{T};\mathcal{X}^{\ker\tau})^{\bar{G}}$ is homeomorphic to the space of fixed-end paths
		\[
			Y=\{x\in H^1(\mathbb{I};\mathcal{X}^{\ker\tau}):\,x(t_0)\in\mathcal{X}^{H_0},\ x(t_1)\in\mathcal{X}^{H_1}\}.
		\]
		\item[$(ii)$] If the action of $G$ is cyclic, then $\Omega^{\bar{G}}$ is homeomorphic to the space
		\[
			Y=\{x\in H^1(\mathbb{I}; \mathcal{X}^{\ker\tau}):\, gx(t_0)=x(t_1)\},
		\] 
		where $g$ is the generator of $\bar{G}$.
	\end{itemize}
\end{proposition}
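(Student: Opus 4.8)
The plan is to realise the claimed homeomorphism explicitly as a restriction map together with its equivariant extension, exploiting the tiling $\mathbb{T} = \bigcup_{[g]\in\bar{G}}\tau(g^{-1})\mathbb{I}$ from Definition~\ref{def:fundamental_domain}. By Proposition~\ref{prop:homom_kertau} I may work throughout with $\Omega^{\bar{G}}$, i.e. with $\bar{G}$-equivariant $H^1$ loops valued in $\mathcal{X}^{\ker\tau}$; the defining equivariance of such an $x$ can be written compactly as $x(\tau(g)s) = g\cdot x(s)$ for every $g\in G$ and $s\in\mathbb{T}$, where $g\cdot$ denotes the action on $\mathcal{X}^{\ker\tau}$ induced by $\rho$ and $\sigma$.

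First I would define the restriction $R\colon\Omega^{\bar{G}}\to Y$, $R(x) = x|_{\mathbb{I}}$, and check that it lands in $Y$. Regularity is immediate. For the boundary conditions, in case $(i)$ I use that each endpoint is a fixed point of its maximal isotropy group: for $h\in H_0$ one has $\tau(h)t_0 = t_0$, so the equivariance relation at $s = t_0$ gives $x(t_0) = h\cdot x(t_0)$, i.e. $x(t_0)\in\mathcal{X}^{H_0}$, and symmetrically at $t_1$. In case $(ii)$, since $t_1 = \tau(g^{-1})t_0$ for the generator $g$, the same relation yields directly the gluing identity $g x(t_0) = x(t_1)$.

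The substance of the argument is the inverse map, the equivariant extension $E\colon Y\to\Omega^{\bar{G}}$. Given $y\in Y$ and $t\in\mathbb{T}$, write $t = \tau(h^{-1})s$ with $s\in\mathbb{I}$ and set $E(y)(t) := h^{-1}\cdot y(s)$. Here two well-definedness issues must be resolved, and this is where I expect the real work to lie. The first is independence of the coset representative of $[h]\in\bar{G}$: replacing $h$ by $hk$ with $k\in\ker\tau$ leaves $\tau(h^{-1}) = \tau((hk)^{-1})$ unchanged, and since $\ker\tau\lhd G$ and $y(s)\in\mathcal{X}^{\ker\tau}$ one checks that $h^{-1}\cdot y(s)\in\mathcal{X}^{\ker\tau}$ and $(hk)^{-1}\cdot y(s) = h^{-1}\cdot y(s)$; normality of $\ker\tau$ is exactly what makes this go through. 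The second, and the crux of the proof, is consistency where two adjacent tiles meet. At such a shared endpoint the two formulas must agree: in case $(i)$ the interfaces are the reflection-fixed endpoints and the matching condition reduces precisely to $y(t_0)\in\mathcal{X}^{H_0}$ (resp. $y(t_1)\in\mathcal{X}^{H_1}$), while in case $(ii)$ it reduces precisely to $g y(t_0) = y(t_1)$. Thus the boundary conditions built into $Y$ are exactly necessary and sufficient for $E(y)$ to be single-valued.

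It then remains to verify that $E(y)$ is genuinely $\bar{G}$-equivariant and of class $H^1$. Equivariance is built into the defining formula, since the $\bar{G}$-action permutes the tiles and $E(y)(\tau(g)t) = g\cdot E(y)(t)$ holds by construction. For regularity I note that on each tile $E(y)$ is $y$ transported by the isometric time-reparametrisation $\tau(h^{-1})$ and the orthogonal map $\rho(h^{-1})$, hence $H^1$ on each closed subinterval; since the values match at the finitely many interfaces, $E(y)$ is continuous, and a piecewise-$H^1$ continuous function on $\mathbb{T}$ is globally $H^1$ (no jump contributions appear in the weak derivative). Finally I would check that $R$ and $E$ are mutually inverse — $R\circ E = \mathrm{id}_Y$ is immediate on the tile $h = 1$, and $E\circ R = \mathrm{id}$ because an equivariant loop is already reconstructed from its values on $\mathbb{I}$ by the same formula — and that both are continuous (indeed bounded linear) maps, so that $R$ is the desired homeomorphism.
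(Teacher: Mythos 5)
Your proof is correct and follows the same route as the paper: the homeomorphism is the restriction to the fundamental domain, with inverse given by concatenating the symmetrised paths $gy$ over the tiles $\tau(g^{-1})\mathbb{I}$. You supply more detail than the paper's (very terse) proof on the well-definedness of the extension --- independence of the coset representative via normality of $\ker\tau$, matching at tile interfaces reducing exactly to the boundary conditions defining $Y$, and piecewise-$H^1$ gluing --- all of which is accurate up to the harmless interchange of $g$ and $g^{-1}$ as the cyclic generator.
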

\begin{proof}
	We start by defining the map $\pi\colon\Omega^{\bar{G}}\to Y$ which associates to any loop $x\in\Omega^{\bar{G}}$ its restriction to the fundamental domain $\pi(x)\uguale x|_{\mathbb{I}}$. The map $\pi$ is clearly well-defined: the boundary conditions are satisfied in both cases, since $x$ is $\bar{G}$-equivariant. Note that the boundary conditions are different since when $\bar{G}$ is cyclic the unique maximal $\mathbb{T}$-isotropy subgroup is $\ker\tau$
	
	On the other hand, for any $y\in Y$ we can construct a unique path in $\Omega^{\bar{G}}$ by concatenating the symmetrised paths $gy$, for any $g\in\bar{G}$. Depending on the action type, anyone of these paths is defined on a reflection or rotation $g\mathbb{I}$ of the fundamental domain. Recalling that
	\[
	\mathbb{T}=\bigcup_{[g]\in\bar G}g\mathbb{I}
	\]
	the proof is complete.	
\end{proof}

At this point, for any action type, we can define the restriction of the action functional to the fundamental domain $\mathbb{I}$ on $Y$ in this way 
\[
\begin{aligned}
\mathcal{A}_{\mathbb{I}}&\colon Y\longrightarrow\R\cup\{+\infty\} \\
&y\mapsto\mathcal{A}_{\mathbb{I}}(y)\uguale\int_{\mathbb{I}}L(y(t),\dot{y}(t))\,dt.
\end{aligned}
\]
Arguing as in the proof of Proposition \ref{prop:homo_fund_domain}, if $y\in Y$ and $y=\pi(x)$, where $x$ is the concatenation of symmetrised paths $gy$, using also the equivariance of the action functional proved in Proposition \ref{prop:equivariance}, it is easy to check that
\begin{equation}\label{eq:proportional}
\mathcal{A}(x)=|\bar{G}|\mathcal{A}_{\mathbb{I}}(y).
\end{equation}
The following result establishes a relation between collision-less critical points of $\mathcal{A}_{\mathbb{I}}$ and symmetric solutions of the $n$-body problem.
\begin{theorem}
	Let $G$ be a finite group acting on $\Lambda$ and let $\mathbb{I}$ be the foundamental domain of its action. Assume that there exists a collision-less critical point $y\in Y$ of the restricted action functional $\mathcal{A}_{\mathbb{I}}$. Then, the symmetrised path $\pi^{-1}y$ is a solution of the $G$-equivariant $n$-body problem. 
\end{theorem}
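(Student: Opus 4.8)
The plan is to promote the collision-less critical point $y$ of the reduced functional $\mathcal{A}_{\mathbb{I}}$ on $Y$ to a collision-less critical point of the full action $\mathcal{A}$ on $\Lambda$, and then to invoke the classical least-action principle. First I would set $x\uguale\pi^{-1}y$ and record that, by Proposition \ref{prop:homo_fund_domain} together with Proposition \ref{prop:homom_kertau}, we have $x\in\Omega^{\bar{G}}=\Lambda^G$. That $x$ is collision-less follows from the $G$-equivariance of the colliding set $\Delta$ (Proposition \ref{prop:equivariance}$(iii)$): on each translated copy $\tau(g^{-1})\mathbb{I}$ of the fundamental domain the loop $x$ is obtained by applying the orthogonal--permutation action of $g$ to a time-reparametrised restriction of $y$, and since this action maps $\hat{\mathcal{X}}$ into itself, the collision-freeness of $y$ on the closed interval $\mathbb{I}$ propagates to all of $\mathbb{T}$, junction points included. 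Hence $x\in\hat{\Lambda}^G$.

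Next I would transfer the criticality. The key tool is the identity \eqref{eq:proportional}, which reads $\mathcal{A}|_{\Omega^{\bar{G}}}=|\bar{G}|\,(\mathcal{A}_{\mathbb{I}}\circ\pi)$, combined with the fact that the restriction map $\pi$ and its inverse (concatenation of symmetrised paths) are both \emph{linear}; consequently $\pi$ is a linear homeomorphism between the Hilbert spaces $\Omega^{\bar{G}}$ and $Y$, hence smooth, with derivative $d\pi$ an isomorphism of the respective tangent spaces. Since $y$ is collision-less, $\mathcal{A}_{\mathbb{I}}$ is of class $\mathcal{C}^1$ near $y$, so differentiating the identity at $x$ by the chain rule gives $d(\mathcal{A}|_{\Lambda^G})(x)=|\bar{G}|\,d\mathcal{A}_{\mathbb{I}}(y)\circ d\pi$, which vanishes because $y$ is critical for $\mathcal{A}_{\mathbb{I}}$ on $Y$. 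Thus $x$ is a collision-less critical point of $\mathcal{A}|_{\Lambda^G}$.

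From here the conclusion is short: by the Palais principle of symmetric criticality (Lemma \ref{lem:palais}), a collision-less critical point of $\mathcal{A}|_{\Lambda^G}$ in $\hat{\Lambda}^G$ is a critical point of $\mathcal{A}$ on the whole of $\Lambda$. Since $\mathcal{A}\in\mathcal{C}^1(\hat{\Lambda})$ and $x$ is collision-less, the standard least-action principle (Lemma 19.1 in \cite{AC-Z}) guarantees that $x$ is a $T$-periodic solution of the Newton equations \eqref{eq:newton}; being moreover an element of $\Lambda^G$, it is $G$-equivariant, i.e. a solution of the $G$-equivariant $n$-body problem.

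I expect the main obstacle to be the second step, namely making the correspondence of critical points across $\pi$ fully rigorous. Although $\pi$ is linear, care is needed with the \emph{free-endpoint} structure of $Y$: one must check that the admissible variations of $y$ in $T_yY$ (free within $\mathcal{X}^{H_0}$ and $\mathcal{X}^{H_1}$, respectively, or constrained by $g\,x(t_0)=x(t_1)$ in the cyclic case) are exactly the images under $d\pi$ of the $\bar{G}$-equivariant variations in $T_x\Omega^{\bar{G}}$, so that stationarity on $Y$ is equivalent to stationarity on $\Lambda^G$ and no boundary contribution is lost. The natural boundary conditions encoded in the criticality on $Y$ are precisely what make the symmetrised velocities match across the junctions; once criticality on the full loop space is established, however, the smoothness of $x$ across the boundaries of the fundamental domains is automatic from $\mathcal{A}\in\mathcal{C}^1(\hat{\Lambda})$.
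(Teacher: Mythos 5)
Your proposal is correct, and its skeleton coincides with the paper's: symmetrise $y$ to $x=\pi^{-1}y$, observe that $x$ is collision-less and $G$-equivariant, transfer criticality from $Y$ to $\Lambda^G$ via the linear homeomorphism $\pi$ and the identity \eqref{eq:proportional}, invoke the Palais principle (Lemma \ref{lem:palais}) to pass to the whole of $\Lambda$, and conclude. The genuine divergence is in how the regularity of $x$ at the junction points $\partial(g\mathbb{I})$ is handled. The paper treats this as the main content of the proof: it splits into the brake/dihedral and cyclic cases and extracts from $d\mathcal{A}_{\mathbb{I}}(y)[\vp]=0$ the \emph{natural boundary conditions} --- $M\dot{y}(t_0)\perp\mathcal{X}^{H_0}$ and $M\dot{y}(t_1)\perp\mathcal{X}^{H_1}$ in the first case, $\dot{y}(T/k)=g\dot{y}(0)$ in the second --- and uses them to check by hand that the symmetrised velocities match $\mathcal{C}^1$-continuously across the junctions. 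You instead bypass this entirely by noting that once $x$ is known to be a critical point of $\mathcal{A}$ on all of $\Lambda$ and is collision-less, the standard regularity for weak critical points of a smooth Lagrangian (the same Lemma 19.1 of \cite{AC-Z} the paper quotes) makes the $\mathcal{C}^1$ (indeed $\mathcal{C}^2$) matching automatic. Both routes are valid, and you correctly isolate the one point that needs care in yours: that the admissible variations in $T_yY$ are exactly the images under $d\pi$ of the equivariant variations, so that no boundary term is lost in the transfer of criticality; since $Y$ and $\Omega^{\bar{G}}$ are closed linear subspaces and $\pi$ is a linear isomorphism between them, this is indeed the case. What your shorter argument gives up is the explicit transversality information on $\dot{y}$ at the endpoints of $\mathbb{I}$, which the paper's version makes visible and which is the condition one actually monitors in the numerical scheme.
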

\begin{proof}
	Consider a critical point $y\in Y$ and the symmetrised path $x=\pi^{-1}y$, which is the concatenation of the fixed-end paths $gy$, for any $g\in G/{\ker\tau}$. This is clearly a $G$-equivariant path, which is also collision-less by assumption. Since $y$ is a critical point of the restricted action functional on $Y$, by straightforward computations and Proposition \ref{prop:homo_fund_domain} we see that the symmetrised path $x$ is a critical point for $\mathcal{A}$ on $\Lambda^G$, and actually a collision-less free critical point on the whole $\Lambda$ (see Lemma \ref{lem:palais}). We are only left to prove that $x$ is $\mathcal{C}^1$ on the whole $\mathbb{T}$, since singularities may appear on the boundary of $g\mathbb{I}$ when we symmetrise $y$. We split the proof in two cases: 
	\begin{itemize}
	\item brake or dihedral action;
	\item cyclic action.
    \end{itemize}
	When the action is brake or dihedral, we exploit the criticality of $y$ in $Y$, which imposes some boundary conditions: not only that $y(t_0)\in\mathcal{X}^{H_0}$ and $y(t_1)\in\mathcal{X}^{H_1}$, but also that $M\dot{y}(t_0)\perp\mathcal{X}^{H_0}$ and $M\dot{y}(t_1)\perp\mathcal{X}^{H_1}$, where $M\in\R^{nd}\times\R^{nd}$ denotes the diagonal matrix of the masses of the bodies. To see that, it is enough to expand the equation $d\mathcal{A}_{\mathbb{I}}(gy)[\vp]=0$, for any $\vp\in Y$. 
	
	Now assume that $G/{\ker\tau}=\langle g:\,g^k=1\rangle$, for some $k\in\N$. Following Definition \ref{def:fundamental_domain}, in this case we have $\mathbb{I}=[0,T/k]$ and $H_0=H_1=\ker\tau$. This means that the path $y(t)\in \mathcal{X}^{\ker\tau}$ for any $t\in[0,T/k]$ and the only condition to be satisfied in order to have a $\mathcal{C}^1$ junction is that
	\[
	\dot{y}(T/k)=g\dot{y}(0).
	\]
	Again, this follows from the criticality condition on $\mathcal{A}_{\mathbb{I}}$.
\end{proof}

As already remarked in the introduction, the search for critical points of the action functional $\mathcal{A}$ is also motivated by a simple condition which guarantees the coercivity of $\mathcal{A}$ (see Proposition \ref{prop:coercive}). There exists also a \emph{local} version of this result, which establishes the coercivity of $\mathcal{A}_{\mathbb{I}}$ on fixed-ends paths under the same condition on $G$.
\begin{lemma}\label{lem:local_coercivity}
	The functional $\mathcal{A}_{\mathbb{I}}$ is coercive on $Y$ if and only if $\mathcal{X}^G=\{0\}$.
\end{lemma}
\begin{proof}
	By Proposition \ref{prop:coercive}-\ref{prop:homom_kertau}, we prove that $\mathcal{A}_\mathbb{I}$ is coercive if and only if $\mathcal{A}|_{\Omega^{\bar{G}}}$ is. Moreover, Proposition \ref{prop:homo_fund_domain} guarantees that, for any $x\in\Omega^{\bar{G}}$ there exists a constant $C>0$, depending exclusively on $|\bar{G}|$, such that
	\[
	\|\pi(x)\|_{H^1}=C\|x\|_{H^1}.
	\]
	This clearly provides the assertion.
\end{proof}
This quite clean condition motivates the search for \emph{local minimisers} of $\mathcal{A}_{\mathbb{I}}$, and it is straightforward to show that the periodic symmetrisation of these paths minimises $\mathcal{A}$ on $\Lambda$. In addition, any candidate minimiser of $\mathcal{A}_{\mathbb{I}}$ lives in the space $Y$ introduced in Proposition \ref{prop:homo_fund_domain} numerical procedures since now we are woking with boundary conditions. Nevertheless, a suitable definition of \emph{local minimiser} has to be given and major assumptions on $G$ are required in order to prove the absence of collisions. We prefer to postpone a survey on such results to Appendix \ref{app:rcp}.

\subsection{Approximating critical points of $\mathcal{A}$}
At this point, we present the numerical procedure which succeeds in finding some examples of such solutions as critical points of the action functional. The first version of this optimisation routine was introduced in \cite{symorb} and now it is also implemented in the Julia package \texttt{SymOrb.jl}. 

\subsubsection{Configuration space and projectors}

As a first step, we need to define a finite-dimensional approximation of the space $Y$ introduced in Proposition \ref{prop:homo_fund_domain}. The space $Y$ already contains two finite-dimensional spaces, the starting and final manifolds $\mathcal{X}^{H_0}$ and $\mathcal{X}^{H_1}$ (which both coincide with $\mathcal{X}^{\ker\tau}$ when the action is cyclic), and thus it is trivially diffeomorphic to the space 
\[
H^1(\mathbb{I};\mathcal{X}^{\ker\tau})\times\mathcal{X}^{H_0}\times\mathcal{X}^{H_1}.
\]
A natural way to approximate elements in $H^1(\mathbb{I};\mathcal{X}^{\ker\tau})$ is by using truncated Fourier series, while the manifolds $\mathcal{X}^{H_i}$ are already embedded in $(\R^d)^n$. As a result, our optimization procedure will take place in the linear space
\[
((\R^d)^n)^F\times (\R^d)^n\times (\R^d)^n ,
\]
where $F\in\N$ stands for the length of Fourier polynomials. 
From this point, we assume that $\mathbb{I}=[0,\pi]$. Consider $A_1, \ldots A_F \in \qty(\R^d)^n$ as the Fourier coefficients and $A_0=x_0, A_{F+1}=x_1\in(\R^d)^n$ as the initial and final configurations. In this way, an approximating path will be defined as
\begin{equation}
	y(t)\uguale x(t)+s(t),
	\label{eq:path}
\end{equation} where $x(t)$ stands for the segment between $x_0$ and $x_1$
\[
x(t)=x_0+\frac{t}{\pi}(x_1-x_0),\quad\text{for}\ t\in[0,\pi],
\]
and $s(t)$ is the truncated Fourier series
\[
s(t)\uguale \sum\limits_{k=1}^FA_k\sin(kt),\quad\text{for}\ t\in[0,\pi].
\]
Consider a discretisation $(t_h)_{h=0\ldots S}$ of the interval $[0,\pi]$, where $t_0=0$ and $t_S=\pi$. Then $\Delta t = \frac \pi S$ and $t_h = h \Delta t$, so that
\[
x(t_h) = x_0 + \frac{h}{S} (x_1 - x_0)\] and \[s(t_h) = \sum_{k=1}^F A_k \sin(\frac \pi S k h).
\]

The functions we want to approximate pointwisely belong to the space $\mathcal{X}^{\ker\tau}$ and also the boundary conditions have to be imposed (note again that they change according with the action type). Therefore, recalling the definition of projector on fixed spaces given in \eqref{eq:projector}, we consider the maps
\[
	\begin{aligned}
		&\pi_{\ker\tau}\colon (\R^d)^n\to \mathcal{X}^{\ker\tau} \\
		&\pi_{H_i}\colon (\R^d)^n\to \mathcal{X}^{H_i},\quad i=0,1,
	\end{aligned}
\]
which project any configuration in the correct equivariant space. Looking at the definition of $Y$ in Proposition \ref{prop:homo_fund_domain}, the cyclic case deserves a different treatment. In that case, the boundary conditions requires an equivariance property, which can be translated by using the following projector
\[
\begin{aligned}
	\pi_g\colon(\R^d)^n\times&(\R^d)^n\longrightarrow V_g \\
	& (v,w)\longmapsto \pi_g(v,w)\uguale \left(\frac12(v+g^{-1}w), \frac12(gv+w)\right)
\end{aligned}
\]
where $g$ is the cyclic generator of $\bar{G}$ and $V_g\uguale\{(v,w)\in(\R^d)^n\times(\R^d)^n:\,gv=w\}$.
With these definitions, the projector of a pair of points on the right boundary condition space is the following
\[
   \pi_{bc}(v,w)=\begin{cases}
				\begin{aligned}
				&(\pi_{H_0}v,\pi_{H_1}w) &\text{if the action is brake/dihedral} \\
				&\pi_g(v,w)  &\text{if the action is cyclic} 
				\end{aligned}
   				\end{cases}
\]
\begin{remark}[Fixing the centre of mass]\label{rem:com}
	We recall that the centre of mass in $\mathcal{X}$ is fixed at the origin. In \texttt{SymOrb} (\cite{symorb}), to accomplish this requirement, at every step of the optimization process the whole configuration is suitably translated. In \texttt{SymOrb.jl} we propose a different approach: only $n-1$ configurations are actually free variables, while the $n$-th is a function of the former so that the center of mass remains at the origin. Beside avoiding a projection at each step, this method considerably reduces the number of parameters to be optimized, hence shortening the computational time, especially for low number of bodies.  
\end{remark}

\subsubsection{The action functional}\label{subsec:action_numerical}

Each optimization step requires the evaluation of the action functional $\mathcal{A}$ introduced in \eqref{eq:action} on a given path, together with its gradient (if the optimisation method is a first order one) and its Hessian (if a second order method is used). It is possible to compute them without approximating the velocities $\dot y(t)$ and the accelerations $\ddot y(t)$ of the path $y(t)$ in a (possibily) rotating reference frame with infinitesimal rotation matrix $\Omega \in \mathfrak{so}(d)$, the Lie algebra of $SO(d)$.

We introduce the kinetic and the potential components of the action functional on $[0,\pi]$
\[
	\mathcal A(y) = \mathcal K(y, \dot y) + \mathcal U(y) =  \int_0^\pi K(y, \dot y)\,dt  + \int_0^\pi U(y)\,dt;
\]
note that the kinetic part here depends not only on $\dot{y}$, but also on $y$, because we can also consider the $n$-body problem in a rotating frame (see also Remark \ref{rem:general_problem}).
In the following, we are going to compute the gradient and the Hessian of $\mathcal{A}$ and, since we are working with approximed paths, such operators will be computed by differentiating with respect to the coefficients matrices $A_0,A_1,\ldots,A_F, A_{F+1}\in(\R^d)^n$. For this reason, we will often write $y(A)$ and $\mathcal{A}(A)$ to point out the dependence on such coefficients matrices. 

\paragraph{The kinetic part}
Consider, in a possibly rotating reference frame, a path $y(t)$ as in \eqref{eq:path}  and its velocity $\dot y(t)$. Let $\tilde{y}(t)$ be the same path in the inertial frame, so that $\dot {\tilde{y}}(t) = \dot y(t) - \Omega y$. The kinetic energy is then

\begin{align*}
	K(y, \dot y) &= \frac 1 2 \sum_{i=1}^n m_i \| \dot{\tilde{y}}_i(t)  \|^2 = \frac 1 2 \sum_{i=1}^n m_i \| \dot y_i(t) - \Omega y(t) \|^2 \\ 
	&= \frac 1 2 \sum_ {i = 1}^n m_i \Big[\underbrace{\dot y_i^t \dot y_i}_\text{linear}  - \underbrace{y_i^t \Omega^2 y_i}_\text{centrifugal} + \underbrace{\left(\dot y_i^t \Omega y_i - y_i^t \Omega \dot y_i\right)}_\text{Coriolis}  \Big]\\ 
	&= K_\text{lin}(\dot y) + K_\text{centr}(y) + K_\text{Cor}(y, \dot y).
\end{align*} 

Integrating from $0$ to $\pi$, since the dependence of $y$ on the coefficients $A_k$ is well understood, the kinetic part of the action can be written as a quadratic form $\mathcal K : ((\R^n)^d)^{F+2}  \to \R $,
such that $\mathcal K(A) =\frac 1 2  y^t(A)\,\mathbf K \,y(A)$ and $\mathbf K = \mathbf K_\text{lin} + \mathbf K_\text{centr} + \mathbf K_\text{Cor}$. In particular, it is possible to rewrite the three components as 
\begin{align*}
	&\mathbf K_\text{lin} = (K^\text{lin}_{jk}) \otimes (\mathbf M \otimes \mathbf{Id}_d),  \\ 
	&\mathbf K_\text{centr} = (K^\text{centr}_{jk}) \otimes (\mathbf M \otimes \Omega^2), \\
	&\mathbf K_\text{Cor}  = (K^\text{Cor}_{jk}) \otimes  (\mathbf M \otimes \Omega),
\end{align*}
where $K^\text{lin}_{jk}, K^\text{centr}_{jk},  K^\text{Cor}_{jk}$ are $(F+2)\times(F+2)$ matrices, $ \mathbf M = \text{diag}(m_1 , \ldots, m_n)$ is the diagonal matrix of masses and $\otimes$ is the Kronecker product.

Hence, the gradient of the kinetic part of the action functional is given by $\nabla \mathcal K(A) = \mathbf K\, y(A)$ and the Hessian is $\nabla^2 \mathcal K(A) = \mathbf K$.

\paragraph{The potential part} Consider the potential function $U(y)$. It can be written as 
\[ 
	U(y) = \sum\limits_{i = 1}^n\sum\limits_{j = i+1}^n \frac{m_i m_j}{f\left(\| y_{ij}\|\right)}  =\sum\limits_{i = 1}^n\sum\limits_{j = i+1}^n  U_{ij}(y_{ij}) \] where $y_{ij} = y_i - y_j$, $f : \R^+ \to \R$ is a function of the norm $\|y_{ij} \|$. Formally, the components of the gradient and of the Hessian $\pdv{U_{ij}(y)}{y_i} \in \R^d$ and $\pdv{U_{ij}(y)}{y_i}{y_j} \in \R^{d \times d}$ are given by 
\[
	\pdv{U_{ij}(y)}{y_i} = - m_i m_j\frac{f'}{f^2} \frac{y_{ij}}{\| y_{ij}\|}
\]
\[
	\pdv{U_{ij}(y)}{y_i}{y_j} =  \frac{m_i m_j}{f^2\cdot \|y_{ij}\|^2} \left[ ( y_{ij} \otimes y_{ij}) \left( f'' - \frac{f'}{\|y_{ij}\|}  -  \frac{2f'^2}{f}\right) + (f' \cdot \|y_{ij}\|) \text{Id}_d\right]
\]
where the dependence of $f$  on $y_{ij}$ is understood. Hence, for $i=1,\ldots,n$ we have
\begin{align*}
	\pdv{U(y)}{y_i} = \sum_{j \neq i} \pdv{U_{ij}(y)}{y_i} 
	\qquad\pdv[2]{U(y)}{y_i} = \sum_{j \neq i} \frac{\partial^2 U_{ij}(y)}{\partial y_i^2},
\end{align*}
while, for $i,j\in\{1,\ldots,n\}$, $i\neq j$ we obtain
\[
	\pdv{U(y)}{y_i}{y_j} = \frac{\partial}{\partial y_j}\left(\frac{\partial U(y)}{\partial y_i}\right)=\pdv{U_{ij}(y)}{y_i}{y_j}.
\]
Therefore, integrating in $[0,\pi]$, for $k,h=0,\ldots,F+1$ we finally have
 \begin{align*}
	&\mathcal U(A) = \int_0^\pi U(y)dt \\ 
	&\grad \mathcal U(A)_k = \int_0^\pi \sum_{i = 1 \ldots n} \pdv{U(y)}{y_i} \pdv{y_i}{A_k} dt \\
	&\hess \mathcal U(A)_{kh} = \int_0^\pi \sum\limits_{i=1}^n\left(\sum\limits_{j=1}^n\pdv[2]{U(y)}{y_i}{y_j}\pdv{y_j}{A_h}\right)^t \pdv{y_i}{A_k}  dt
\end{align*}
where the integration is performed numerically. Note that $\frac{\partial y_i}{\partial A_k}$ is independent of $A_k$ and can be precomputed.

\begin{remark}\label{rem:com2}
	As explained in Remark \ref{rem:com}, analyzing the code it turns out that each matrix involved in the computation actually refers to $n-1$ bodies and already encodes the dependence of the last one from the others. For the sake of clarity and consistency with the theoretical framework, we prefer to not change the dimensions neither in the next section, understanding that the $n$-th body is a function of the others $n-1$.
\end{remark}

\subsection{Numerical algorithm}

Below we collect the main steps involved in the numerical optimization routine:
\begin{itemize}
	\item Consider an initial sequence
	\[
	A_0, A_1,\ldots, A_F, A_{F+1}\in (\R^d)^n,
	\]
	where $A_0$ and $A_{F+1}$ correspond to the initial and final configurations, while $A_1,\ldots,A_F$ represent Fourier coefficients. This sequence can be random or chosen ad-hoc to improve the convergence of the algorithm.
	\item Project $A_0$ and $A_{F+1}$ onto the boundary manifolds $\mathcal{X}^{H_0}, \mathcal{X}^{H_1}$ using the map $\pi_{bc}$ and thus obtaining the initial and final configurations
	\[
		(x_0,x_1)=\pi_{bc}(A_0,A_{F+1}).
	\]
    Then project $x_0, x_1, A_1, \ldots,A_F$ onto $\mathcal X^{\ker\tau}$ using the map $\pi_{\ker\tau}$. Note that, since the path depends linearly on the Fourier coefficients, it is possible to project the latter instead of the former.
	\item Optimise the action functional, written in terms of Fourier coefficients on the fundamental domain $\mathbb I = [0,\pi]$, using a suitable numerical method that stops when a convergence conditions is fulfilled (see in particular Section \ref{sec:sym_guide}).  
	At each optimisation step, before computing the action, its gradient and its Hessian, project $x_0,x_1,A_1, \ldots A_F$ as before. Moreover, project also the gradient and the Hessian of the action, to ensure that the optimisation is performed in the correct space. The gradient is projected using the same projectors used for the Fourier coefficients. The Hessian is projected using the same projectors used for the Fourier coefficients applied to the rows and columns of the Hessian.
\end{itemize}

	\subsubsection{Extension to the whole period}
	The optimized path is defined on the fundamental domain $\mathbb{I}=[0,\pi]$ via Fourier coefficients and can be extended to the whole period $\mathbb{I} = [0, m\pi]$, where $m = |\bar G|$, using the symmetries of the group and an inverse Fourier transform.
	In fact, consider $y(t)$ as in \eqref{eq:path} where the coefficients $A_0, \ldots, A_{F+1}$ are the optimized ones. Let ${y_h  \uguale y(t_h)}, 0\leq h \leq S$ be its discretisation. 
	\paragraph{Cyclic action} In the cyclic case, the group $\bar G$ is generated by $g\in G$. The path can be extended up to $y_{mS}$ by

	\[ 
	y_{jS + k} = \rho(g^j)y_k \qquad \text{for } 1\leq j \leq m, 0\leq k < S.
	\]

	\paragraph{Dihedral or brake action} In the dihedral or brake case, the group $\bar G$ is generated by $g_0 \in H_0$ and $g_1 \in H_1$. The path can be extended up to $y_
	{2S}$ by
	\[
		y_{S+k} = \rho(g_1)y_{S-k} \qquad \text{for } 0 \leq k < S.
	\]
	Then it can be extended to the whole period, up to $mS$, analogously to the cyclic case, replacing $S$ with $2S$. Clearly, if the action is of brake type, this last step is not needed.

\section{The basics of \texttt{SymOrb.jl}}\label{sec:sym_guide}

This section contains a short guide through the main steps required to implement an optimisation routine in \texttt{SymOrb.jl}. An exhaustive and practical guide through the up-to-date version of the software can be reached at \url{link_to_github}, endowed with sessions examples and tips. 

\subsection{Wrapping the group information}

Once the dimension $d$ of the configuration space and the number of body $n\ge 3$ are fixed, the equivariant optimisation process is based on the choice of a symmetry group $G$. As widely described in Section \ref{sec:sym_lambda}, we recall that the action of $G$ is represented through three homomorphisms $\rho\colon G\to O(d), \tau\colon G\to O(2)$ and $\sigma\colon G\to \Sigma_n$, and that the quotient $G/\ker\tau$ is a finite subgroup of $O(2)$. In the previous sections we saw that the optimisation routine starts with the determination of the boundary manifolds $\mathcal{X}^{H_0}$ and $\mathcal{X}^{H_1}$ and the connection manifold $\mathcal{X}^{\ker\tau}$. From a numerical perspective, to determine such optimisation spaces where the projectors act, we need to specify:
\begin{itemize}
\item the action type (cyclic, dihedral or brake);
\item the representations of the two generators of $G/\ker\tau$, each of them as a pair composed by a matrix in $O(d)$ and a permutation in $\Sigma_n$. If the action is of dihedral or brake type, we need an actual pair of generators: one acting as a rotation, the second as a reflection. When the action is cyclic, the two generators are the same and act as a rotation;
\item the generators of $\ker\tau$, again as pairs of matrices and permutations. Note that, if $\ker\tau$ is trivial no generators have to be specified.
\end{itemize}
With this information, we can reconstruct the whole group $G$ using an ad-hoc \texttt{GAP} library, whose original version \texttt{SymOrb.g} has been created by Davide Ferrario and can be reach at \cite{symorb}. 

In addition, we need to pass as input the following parameters:

\begin{itemize}
	\item the masses of the bodies as positive real numbers;
	\item the $d\times d$ anti-symmetric matrix $\Omega\in \mathfrak{so}(d)$, which is the infinitesimal generator of the angular velocity as explained in Section \ref{subsec:action_numerical} (see also Remark \ref{rem:general_problem});
	\item the number of Fourier coefficients $F$;
	\item the denominator $f\colon\R^+\to\R$ of the potential function $U$ (see Remark \ref{rem:general_problem}). 
\end{itemize}
Let us remark that it is convenient, but not necessary, to choose $G$ in such a way that the admissibility condition on the action of $G$ is satisfied (see Definition \ref{def:admissible_action}).

\subsection{Optimisation routines}

The next step is to choose a starting path, from which the optimisation routine begins. This actually means to provide $F$ Fourier coefficients, and the initial and final configurations (as described in Section \ref{sec:optimisation}). The software allows to proceed with the natural choice of random coefficients, but also to exploit user-provided initial paths, such as a circular guess, in which the bodies are equally spaced. 

At this point, the optimisation routine can finally start; available methods implemented in the Julia package are:
	\begin{itemize}
	\item \textbf{First order methods}: Conjugate-Gradient, Gradient descent and BFGS, which are all implemented in the Julia package \texttt{Optim.jl} \cite{Optim.jl};
	\item \textbf{Second order methods}: second order methods are used  to find the solutions of $\grad \mathcal A = 0$. The impelemented method is a Newton method, both with line search and trust region, from the Julia package \texttt{NLSolve.jl} \cite{NLSolve.jl}.
	\end{itemize}
The choice of the method requires also to fix the maximum number of iterations and a condition under which the convergece is achievable. 
Note that such methods can also be combined, in order to improve the convergence to non-trivial Morse index critical points hidden in some regions of the competitors space.

\subsection{Output and visualisation}

Once a numerical critical point is found, the output consists of the list of Fourier coefficients on the fundamental domain. The actual trajectories of the bodies are then reconstructed exploiting the symmetry properties and using an inverse Fourier transform (see Section \ref{sec:optimisation}). If $d=2,3$, it is also possible to visualise the symmetric orbit as an animation, using a graphic engine implemented with the library \texttt{Makie.jl} (see \cite{DanischKrumbiegel2021}).

\subsection{Example session}
\label{sec:example_session}
Following the examples described above in \ref{ex:d6_1} and \ref{ex:d6_2}, we provide a simple session to show how to use \texttt{SymOrb.jl} to find a critical point of the action functional with a prescribed symmetry. In the following, it is assumed that Julia and the \texttt{SymOrb.jl} package, \cite{SymOrbJL}, are already installed. 

The first step is to create a \texttt{TOML} file which contains the information about the group, the masses, the number of Fourier coefficients and the potential function. For $3$ bodies in the plane under the action of a $D_6$ symmetry and Newtonian potential the file \texttt{d6\_plane.toml} is the following:

\begin{lstlisting}[language=TOML]
	symmetry_name = "d6_plane"
	# Number of bodies, dimension and masses
	NOB = 3									
	dim = 2
	m = [1, 1, 1]

	# Action type: 0 = Cyclic, 1 = Dihedral, 2 = Brake
	action_type = 1			

	# Group generators
	kern = "TrivialKerTau(2)"
	rotV = "[[1, 0], [0, 1] ]"
	rotS = "(1,2,3)"
	refV = "[[-1,  0], [ 0, -1] ]"
	refS = "(1,2)"

	# Other configs
	F = 24
	Omega = [
		[0, 0],
		[0, 0]
	]
\end{lstlisting}

\noindent Open the Julia REPL and import the package \texttt{SymOrb.jl}:

\begin{lstlisting}[language=Julia]
	using SymOrb
\end{lstlisting}
Then, initialise the problem to get an object of type \texttt{Problem} that contains all the information about the group, the masses, the number of Fourier coefficients and the potential function.
\begin{lstlisting}[language=Julia]
	P = initialize("d6_plane.toml")
\end{lstlisting}

Next, run the optimisation routine. The default behaviour is to find a single orbit, starting from a random path, using a BFGS method that stops after 200 steps or when the norm of the action gradient is smaller than $10^{-8}$. To get a list of the possible options, use the command \texttt{?find\_orbits} in the REPL.
\begin{lstlisting}[language=Julia]
	result = find_orbits(P)
\end{lstlisting}
The output is an array of objects of type \texttt{MinimizationResult}, that in this case contains only one element. To retrieve the Fourier coefficients of the found orbit, access the field \texttt{fourier\_coeff} of the object \texttt{result[1]}.

The last step is to visualise the output. To enable the animation, the package \texttt{GLMakie.jl} must be installed and imported. 

\begin{lstlisting}[language=Julia]
	using GLMakie
\end{lstlisting}

The animation routine allows to visualise the complete periodic orbit and requires the information on the symmetry contained in \texttt{P} and the Fourier coefficients on the fundamental domain,  \texttt{result[1].fourier\_coeff}

\begin{lstlisting}[language=Julia]
	path_animation(P, result[1].fourier_coeff)
\end{lstlisting}

\begin{figure}
	\centering
	\includegraphics[width=0.8\textwidth]{"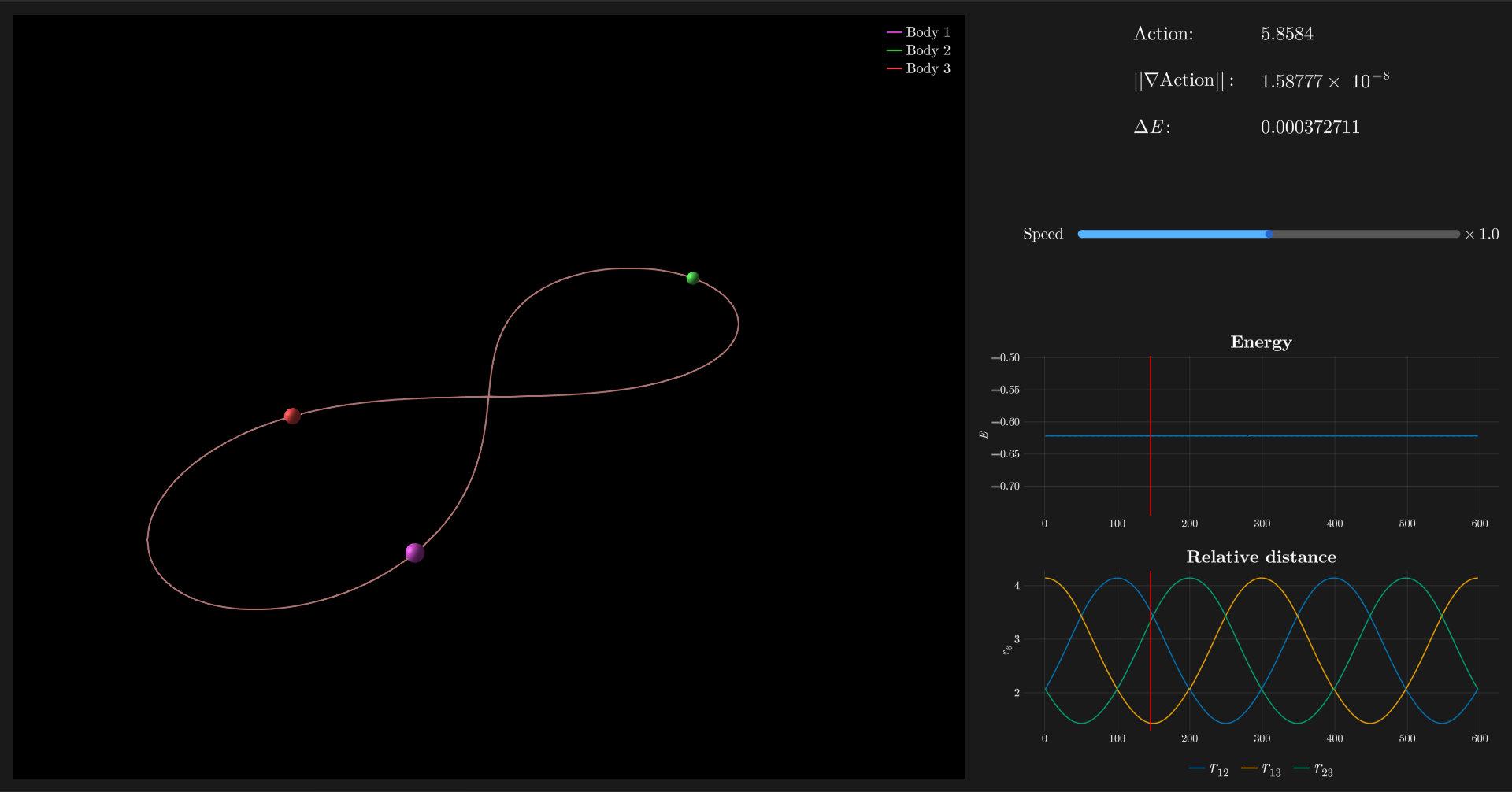"}
	\caption{The visualisation periodic orbit found by the optimisation routine described in section \ref{sec:example_session}.}
\end{figure}

In order to keep the obtained result, a copy of the output of the optimisation routine will be written in a \texttt{TOML} file whose name is the same as the action value, in a directory named \texttt{d6\_plane}. The following command will read the file (in this example, \texttt{"d6\_plane/5.8584.toml"}) and returns the problem configuration \texttt{P} and the Fourier coefficients \texttt{coeff}. 
\begin{lstlisting}[language=Julia]
	P, coeff = read_path_from_file("d6_plane/5.8584.toml")
\end{lstlisting}

\begin{remark}\label{rem:com3}
	Since the optimised Fourier coefficients refer only to the first $n-1$ bodies, as already stated in Remarks \ref{rem:com} and \ref{rem:com2}, the dimension of output \texttt{coeff} will be $(F+2)\times (n-1) \times d$, in this case $26\times 2\times 2 = 104$. It is though possible to obtain the actual trajectories on the whole period, including the $n$-th body's one, using the functions \texttt{extend\_to\_period} and \texttt{build\_path} as follows:

\begin{lstlisting}[language=Julia]
	y = build_path(P, x)
	x = extend_to_period(P, coeff)
\end{lstlisting}
The $k-$th component of the position of the $i$-th body at time step $h$ is stored in \texttt{y[k, i, h]}.
\end{remark}

\subsection*{Acknowledgements}

We thank Irene De Blasi for usefully commenting on the original version of the code \cite{symorb}. 

\noindent The first, third, fifth and sixth authors are partially supported by INDAM-GNAMPA research group. The first author is partially supported by the PRIN 2022 project 2022FPZEES -- \emph{Stability in Hamiltonian Dynamics and Beyond}. The fifth and sixth author are partially supported by the PRIN 2022 project 20227HX33Z -- \emph{Pattern formation in nonlinear phenomena.}

\appendix

\section{Finite subgroups of $O(2)$}

Our major interest resides on some particular finite groups, which turn out to be very useful to describe the symmetries acting on the $n$-body problem. In Section \ref{sec:sym_group} we have recalled the concept of \emph{group action}, which is a very smart way of describing the symmetry of an object and allows to store in a group all the necessary information to describe these symmetries. There are some particular classes of groups whose action models some reductions by symmetry of the $n$-body problem and we collect them in this short section.

\begin{definition}[Dihedral groups]\label{def:dihedral}
	A \emph{dihedral group} is the group of symmetries of a regular polygon, i.e., the group of \emph{reflections} and \emph{rotations} which fix the polygon. If we consider a polygon with $n$ edges, also named $n$-\emph{gon}, we define $D_{2n}$ as its dihedral group. The elements of $D_{2n}$ will be rotational symmetries, reflectional symmetries and their compositions. Note that rotations which fix a polygon with $n$ edges are those of $\frac{2\pi}{n}$ and their multiples. The order of the dihedral group $D_{2n}$ is exactly $2n$, since it contains $n$ rotational symmetries and $n$ reflectional symmetries.  The dihedral group $D_{2n}$ can be generated by a unique rotation $r$ and a unique reflection $s$, so that it is usually defined through the following presentation:
	\[
	\langle s,r:\,s^2=r^n=(sr)^2=1\rangle.
	\]
\end{definition}

It is quite clear that the 2-dimensional \emph{orthogonal group} $O(2)$ plays a foundamental role in the context of this paper. We briefly recall that $O(2)$ is the matrix group of $2\times 2$ real orthogonal matrices, and it contains the \emph{special orthogonal group} $SO(2)=\left\lbrace M\in O(2):\,\det (M)=1\right\rbrace$. Therefore, the elements act either as a rotation about the origin or a reflection with respect to a line passing through the origin. For instance, we can represent a rotation of angle $\vt$ in this way:
\[
\begin{pmatrix}
	\cos\vt & -\sin\vt \\
	\sin\vt & \cos\vt
\end{pmatrix}
\]
and a reflection with respect to a line which forms an angle $\vt$ with the horizontal axis in this way:
\[
\begin{pmatrix}
	\cos2\vt & \sin2\vt \\
	\sin2\vt & -\cos2\vt
\end{pmatrix}.
\]
Thus, we can expect that the classification of the finite subgroups of $O(2)$ is quite precise. We have indeed the following classical result. 

\begin{proposition}[Finite subgroups of $O(2)$]\label{prop:subgroups_orthogonal}
	Any finite subgroup $H$ of $O(2)$ is either cyclic or dihedral. In particular:
	\begin{itemize}
		\item if $H\sset SO(2)$ then $H$ is cyclic and only contains rotations;
		\item if $H=\{1,S\}$, with $S\in O(2)\setminus SO(2)$ a reflection, then $H$ is cyclic of order 2;
		\item if $H$ contains both elements of $SO(2)$ and $O(2)\setminus SO(2)$ then it is dihedral.
	\end{itemize}
\end{proposition}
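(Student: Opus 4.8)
The plan is to exploit the determinant homomorphism $\det\colon O(2)\to\{\pm1\}$, whose kernel is exactly $SO(2)$, in order to split $H$ into its rotational part and a single coset of reflections. First I would set $C\uguale H\cap SO(2)$, the subgroup of rotations contained in $H$, and prove the key structural fact that every finite subgroup of $SO(2)$ is cyclic. Identifying $SO(2)$ with the circle group $\cerchio$ via the angle of rotation, a finite subgroup $C$ of order $m$ corresponds to a finite set of angles closed under addition modulo $2\pi$; picking the smallest positive angle $\vt$ occurring in $C$ and applying the division algorithm shows that every angle in $C$ is an integer multiple of $\vt$, so that $C=\langle r\rangle$ is cyclic of order $m$, generated by the rotation $r$ of angle $2\pi/m$. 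This already settles the first bullet: if $H\sset SO(2)$ then $H=C$ is cyclic and contains only rotations.

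Next I would treat the case in which $H$ contains at least one reflection $S\in O(2)\setminus SO(2)$. Since $\det|_H$ is then surjective onto $\{\pm1\}$ with kernel $C$, the coset $SC$ accounts for all reflections of $H$ and $[H:C]=2$, whence $|H|=2m$ and $H=C\cup SC$. Every element of $H$ is therefore of the form $r^j$ or $Sr^j$ with $0\le j<m$. To recognise $H$ as dihedral I would verify the defining relations: $r^m=1$ by construction, $S^2=\mathrm{Id}$ because any element of $O(2)\setminus SO(2)$ is a reflection and hence an involution, and the conjugation relation $SrS^{-1}=r^{-1}$, which expresses the geometric fact that reflecting a rotation of angle $\vt$ reverses it. From $S^{-1}=S$ and $SrS=r^{-1}$ one obtains $Sr=r^{-1}S$, and then $(Sr)^2=Sr\cdot Sr=r^{-1}S\cdot Sr=r^{-1}S^2r=\mathrm{Id}$, so that $H=\langle r,S:\,r^m=S^2=(Sr)^2=1\rangle$ is precisely the presentation of $D_{2m}$.

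Finally I would dispose of the two remaining bullets as sub-cases of the reflection case according to the value of $m$. When $m=1$ the rotational part is trivial, $H=\{1,S\}$ is cyclic of order $2$ (the degenerate dihedral group), matching the second bullet; when $m\ge2$ the group $H$ genuinely contains both a nontrivial rotation and a reflection and is dihedral of order $2m$, matching the third.

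I expect the main obstacle to be the cyclicity of $C=H\cap SO(2)$: one must argue carefully that a finite set of angles closed under the group operation is generated by its least positive element, which is exactly where finiteness is used and where the circle-group structure of $SO(2)$ (as opposed to that of a general abelian group, which need not have cyclic finite subgroups) is essential. The verification of the dihedral relations and the coset bookkeeping are then routine consequences of the elementary multiplication rules in $O(2)$ recalled just before the statement, namely that the product of two rotations or of two reflections is a rotation, while the product of a rotation and a reflection is a reflection.
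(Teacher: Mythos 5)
Your proof is correct and complete. Note that the paper itself offers no proof of Proposition \ref{prop:subgroups_orthogonal}: it is stated as a classical result immediately after recalling the matrix forms of rotations and reflections, so there is no argument in the text to compare yours against. Your route is the standard one --- cutting $H$ by the determinant homomorphism, proving cyclicity of $H\cap SO(2)$ via the least positive angle and the division algorithm, and then verifying the dihedral presentation $\langle r,S : r^m=S^2=(Sr)^2=1\rangle$ from $SrS^{-1}=r^{-1}$ --- and it correctly isolates the only step where finiteness is genuinely used. The one cosmetic point worth flagging is that for $m=2$ your ``dihedral'' group is the Klein four-group, a degenerate case of the presentation in Definition \ref{def:dihedral} (which is phrased in terms of a regular $n$-gon); this does not affect the validity of the argument.
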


\section{Absence of collisions for local minimisers}\label{app:rcp}

As announced in Section \ref{sec:optimisation}, we collect here a survey on the arguments which guarantee the absence of collisions for local minimisers of the action functional $\mathcal{A}$ (we mainly refer to Sections 9-10 of \cite{FT2003}). For a finite group $G$, we recall that $\mathbb{I}$ is the fundamental of the action of $G$ (see Definition \ref{def:fundamental_domain}) and the restriction of $\mathcal{A}$ to $\mathbb{I}$ is the functional $\mathcal{A}_{\mathbb{I}}\colon Y\to\R\cup\{+\infty\}$ which reads
\[
\mathcal{A}_{\mathbb{I}}(y)=\int_{\mathbb{I}}\left[\frac12\sum\limits_{i=1}^n m_i\|\dot{y}(t)\|^2+\sum\limits_{i<j}\frac{m_im_j}{\|y_i(t)-y_j(t)\|}\right]\,dt,
\]
where $Y$ is defined in Proposition \ref{prop:homo_fund_domain}. Lemma \ref{lem:local_coercivity} inspires the search for minimisers for $\mathcal{A}_{\mathbb{I}}$ in $Y$. The following definition specifies the nature of minimisers in this context, with respect to local deformations in $Y$. 
 
\begin{definition}
	We say that $y\in Y$ is a \emph{local minimiser} of $\mathcal{A}_{\mathbb{I}}$ if, for any $\vp\in Y$ and for any $\ve$ sufficiently small we have
	\[
	\mathcal{A}_{\mathbb{I}}(x)\leq\mathcal{A}_{\mathbb{I}}(x+\ve\vp).
	\]
\end{definition}
Assuming $\mathcal{X}^G=\{0\}$, the existence of at least a local minimiser is guaranteed by Lemma \ref{lem:local_coercivity} (see also \cite[Proposition 4.12]{FT2003}), which once symmetrised is a minimiser of $\mathcal{A}$ in $\Lambda^G$, defined on the whole period. Since $\Lambda^G$ is closed, the minimiser could lie in the boundary and collisions between the bodies may occur. The following definitions prepare the reader to a stronger condition on $G$ -- known as the \emph{rotating circle property} -- which forces the minimisers to avoid collisions. 
\begin{definition}
	Let $G$ be a finite group acting orthogonally on $\R^d$ through the representation $\rho$ and $\mathbb{S}\sset\R^d$ be a circle centred at the origin. We say that $\mathbb{S}$ is \emph{rotating under a subgroup} $H$ of $G$ if the restriction $h|_\mathbb{S}$ of the action of every element $h\in H$ is a rotation of an angle $\vt_h$. In particular, in this case $\mathbb{S}$ is invariant under the action of $H$.
\end{definition}

\begin{definition}
	Let $G$ be a finite group acting on the index set $\{1,\ldots,n\}$ through the representation $\sigma$. Fix $i\in\{1,\ldots,n\}$ and let $H$ be a subgroup of $G$, let $K_i$ denote the isotropy subgroup of the index $i$ in $H$, i.e., 
	\[
	K_i\uguale\{h\in K:\,hi=i\}.
	\] 
	A circle $\mathbb{S}\sset\R^d$ centred at the origin is termed \emph{rotating for i under} $H$ if
	\begin{itemize}
		\item $\mathbb{S}$ is rotating under $H$;
		\item $S\sset(\R^d)^{K_i}$.
	\end{itemize}
\end{definition}

\begin{definition}
	We say that a group $G$ acts on $\Lambda$ with the rotating circle property if, for every $\mathbb{T}$-isotropy subgroup $G_t$ and for at least $n-1$ indexes $i\in\{1,\ldots,n\}$, there exists a circle $\mathbb{S}\sset\R^d$ centred at the origin rotating for $i$ under $G_t$.
\end{definition}

The following result establishes in two steps the absence of collisions for local minimisers.
\begin{lemma}[Theorem 10.3, \cite{FT2003}]\label{lem:ferrario_terracini}
	Assume that $\ker\tau$ acts on $\Lambda$ with the rotating circle property. Then, any local minimiser $y\in Y$ of $\mathcal{A}_{\mathbb{I}}$ is free of interior collisions, i.e., $y(t)\not\in\Delta$, for any $t\in(t_0,t_1)$. Moreover, if also $H_0$ and $H_1$ act on $\Lambda$ with the rotating circle property, $y(t_0),y(t_1)\not\in\Delta$ and the minimiser is always collision-less. 
\end{lemma}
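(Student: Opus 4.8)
The plan is to argue by contradiction: assume a local minimiser $y\in Y$ of $\mathcal{A}_{\mathbb{I}}$ exhibits a collision at some interior instant $t^\ast\in(t_0,t_1)$, and produce a symmetry-compatible variation that strictly lowers the action, contradicting local minimality. The whole mechanism hinges on the rotating circle property, which is precisely the condition guaranteeing that a one-parameter family of rotations is available \emph{within} the constraint space. I would first treat interior collisions using the rotating circle property of $\ker\tau$, and then handle the two boundary instants $t_0,t_1$ separately by the same scheme applied to $H_0$ and $H_1$.

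First I would record the local structure of $y$ near a collision. On every subinterval of $\mathbb{I}$ free of collisions the minimiser solves the Euler--Lagrange equations $m_i\ddot y_i=\partial U/\partial y_i$, so classical asymptotic estimates for the homogeneous Newtonian potential apply: as $t\to t^\ast$ the colliding cluster shrinks like $|t-t^\ast|^{2/3}$, the mutual distances between distinct clusters stay bounded away from zero, and the normalised configuration converges to a central configuration. This isolates a well-defined colliding cluster and shows that at least one index $i$ involved in the collision is among the $n-1$ indices for which the rotating circle property furnishes a circle $\mathbb{S}\sset\R^d$ centred at the origin, rotating for $i$ under $\ker\tau$, with $\mathbb{S}\sset(\R^d)^{K_i}$.

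The core step is the averaging variation in the spirit of Marchal. Let $R_\vt$, $\vt\in[0,2\pi)$, denote the one-parameter group of rotations of $\R^d$ acting in the plane of $\mathbb{S}$ and fixing its orthogonal complement; since $\mathbb{S}$ rotates under $\ker\tau$ and $\mathbb{S}\sset(\R^d)^{K_i}$, the action of $R_\vt$ on the colliding body (and on its $\ker\tau$-images) commutes with the symmetry constraint, so rotating $y$ on a small window $(t^\ast-\ve,t^\ast+\ve)$ and reconnecting via a fixed-ends interpolation produces a competitor $y^\vt$ that still lies in $Y$. Averaging over $\vt$ and using the rotational invariance of $\mathcal{A}_{\mathbb{I}}$ together with the convexity of the kinetic term (so the kinetic cost of the averaged path does not exceed the average of the kinetic costs) I would obtain that the averaged path has action no larger than that of $y$. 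The strict gain comes from the potential: the homogeneity of $1/\|\cdot\|$ makes the circular average of the singular interaction strictly smaller than its value along a path that genuinely collides, and this strict decrease survives the small kinetic overhead of the interpolation as $\ve\to 0$. Hence the averaged competitor belongs to $Y$, is collision-free on the window, and has strictly smaller action, contradicting the local minimality of $y$.

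Finally I would deal with the endpoints. The same argument applies at $t_0$ and $t_1$, except that there the relevant $\mathbb{T}$-isotropy subgroup is $H_0$ (resp.\ $H_1$) rather than $\ker\tau$, and the variation must respect the boundary condition $x(t_0)\in\mathcal{X}^{H_0}$ (resp.\ $x(t_1)\in\mathcal{X}^{H_1}$); the rotating circle property for $H_0$ and $H_1$ provides exactly a rotation group preserving $\mathcal{X}^{H_0}$ and $\mathcal{X}^{H_1}$, so the boundary-compatible averaging goes through and excludes collisions at $t_0,t_1$ as well. I expect the main obstacle to be the quantitative averaging estimate: one must show that the strict decrease of the averaged potential dominates the kinetic price of reconnecting the rotated arc, uniformly as the window width tends to zero, which is exactly where the precise collision asymptotics and the homogeneity degree of the potential enter.
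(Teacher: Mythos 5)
The paper itself does not prove this lemma; it is imported verbatim from Theorem 10.3 of Ferrario--Terracini, so the only meaningful comparison is with the argument there. Your overall strategy -- blow-up asymptotics at an isolated collision plus a Marchal-type averaged variation over the rotating circle, done first in the interior with $\ker\tau$ and then at $t_0,t_1$ with $H_0,H_1$ -- is the right one and matches that reference. But the core deformation is set up incorrectly. The rotating circle property is not about rotating the arc of the path about the origin: the circle $\mathbb{S}\sset(\R^d)^{K_i}$ is a circle of \emph{displacement directions} $\xi$, and the admissible variation is $y_i(t)\mapsto y_i(t)+\delta(t)\xi$ for a single colliding index $i$ (extended to the whole $\sigma$-orbit of $i$ by equivariance, which is exactly what the condition ``$\mathbb{S}$ rotating under $G_t$'' and $\mathbb{S}\sset(\R^d)^{K_i}$ guarantee). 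Your variation, as written, either rotates all bodies on a window -- which leaves $U$ pointwise unchanged, since $U$ is $O(d)$-invariant, so no gain is possible -- or rotates one body about the origin, which fails whenever the collision occurs at or near the origin and in any case is not the variation the hypothesis licenses.

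The second gap is in how you extract the contradiction. You propose to take the \emph{averaged path} as the competitor and invoke convexity. The kinetic term is indeed convex, but $\|x\|^{-1}$ is not a convex function of the configuration (its Hessian has a negative eigenvalue in the directions orthogonal to $x$), so Jensen does not give you control of the potential of the averaged path; moreover the average over a full circle of rotations collapses the arc onto a fixed subspace and need not remove the collision. The correct logic is the opposite: one averages the \emph{actions} over the family $\{y^\xi\}_{\xi\in\mathbb{S}}$ and shows
$\frac{1}{|\mathbb{S}|}\int_{\mathbb{S}}\mathcal{A}_{\mathbb{I}}(y^\xi)\,d\xi<\mathcal{A}_{\mathbb{I}}(y)$,
whence \emph{some} $\xi$ gives a strictly better competitor in $Y$. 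The strict inequality rests on the explicit computation that the circle-average of the Kepler kernel, $\xi\mapsto\|y_{ij}+\delta\xi\|^{-1}$, is bounded by the potential of a homogeneous circle and hence stays bounded as $\|y_{ij}\|\to0$, so the potential gain diverges near the collision while the kinetic overhead $\tfrac{m_i}{2}\int\dot\delta^2$ stays bounded; this is the quantitative estimate you correctly identify as the crux, but it only makes sense once the variation is the one-body displacement described above.
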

Note that, in the previous lemma, we have made no distinction between the cyclic and not cyclic action cases. The reason is again that, when $\bar{G}$ is cyclic, then $H_0=H_1=\ker\tau$ and the absence of collisions in the whole period follows from the fact that $\ker\tau$ acts with the rotating circle property. Moreover, note that if $\ker\tau=1$, the rotating circle property holds trivially, so that any cyclic group $G$ generates $G$-equivariant collision-less solutions as action-minimisers.

To summarise, the main result proved in \cite{FT2003} is the following

\begin{theorem}
	Assume that $\mathcal{X}^G=\{0\}$ and that $\ker\tau$, $H_0$ and $H_1$ act on $\Lambda$ with the rotating circle property. Then, there exists a classical solution of the $G$-equivariant $n$-body problem. 
\end{theorem}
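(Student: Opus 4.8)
The plan is to run the direct method of the calculus of variations on the reduced, fixed-ends functional $\mathcal{A}_{\mathbb{I}}\colon Y\to\R\cup\{+\infty\}$ and then transport the minimiser back to $\Lambda^G$ through the machinery already assembled. First I would record that the hypothesis $\mathcal{X}^G=\{0\}$ makes $\mathcal{A}_{\mathbb{I}}$ coercive on $Y$: this is exactly Lemma \ref{lem:local_coercivity}. To apply the direct method I also need $\mathcal{A}_{\mathbb{I}}\not\equiv+\infty$, which holds because the rotating circle property furnishes collision-free equivariant competitors; concretely, the rotating circles lie in the fixed subspaces $\mathcal{X}^{\ker\tau}$, $\mathcal{X}^{H_0}$, $\mathcal{X}^{H_1}$ and meet $\hat{\mathcal{X}}$, so one can interpolate a non-colliding path of finite action in $Y$.

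Next I would establish existence of a minimiser. The set $Y$ is weakly closed in $H^1(\mathbb{I};\mathcal{X}^{\ker\tau})$, being cut out by the affine boundary constraints $x(t_0)\in\mathcal{X}^{H_0}$, $x(t_1)\in\mathcal{X}^{H_1}$ (or the single equivariance constraint $gx(t_0)=x(t_1)$ in the cyclic case). The functional $\mathcal{A}_{\mathbb{I}}$ is weakly lower semicontinuous: its kinetic part is a non-negative quadratic form, hence convex and weakly lsc, while the potential part is non-negative and lsc by Fatou's lemma along a subsequence converging uniformly off the collision set. Coercivity bounds any minimising sequence in $H^1$, so a subsequence converges weakly to some $y\in Y$ realising the infimum.

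The crux is to rule out collisions for this minimiser, and here I would invoke Lemma \ref{lem:ferrario_terracini} directly. Since $\ker\tau$, $H_0$ and $H_1$ all act with the rotating circle property, that lemma guarantees $y(t)\notin\Delta$ for every $t\in\mathbb{I}$, endpoints included. This is the genuinely hard step and the heart of \cite{FT2003}: it rests on a blow-up and averaging analysis that uses the rotating circle to build a strictly lower-action comparison path near any putative collision. As it is available to us as a lemma, the present argument reduces to matching its hypotheses with ours, which coincide exactly.

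Finally I would upgrade the collision-free minimiser to an honest solution. Being collision-free, $y$ is an interior critical point of the $\mathcal{C}^1$ functional $\mathcal{A}_{\mathbb{I}}$, so the Theorem of Section \ref{sec:optimisation} applies: the symmetrised concatenation $\pi^{-1}y$, built through the homeomorphism of Proposition \ref{prop:homo_fund_domain}, is $\mathcal{C}^1$ across the junctions and solves the $G$-equivariant $n$-body problem. Equivalently, Proposition \ref{prop:homom_kertau} identifies $\Lambda^G$ with $\Omega^{\bar{G}}$, the relation \eqref{eq:proportional} shows that $\pi^{-1}y$ minimises $\mathcal{A}$ on $\Lambda^G$, and the Palais principle of symmetric criticality (Lemma \ref{lem:palais}) promotes it to a critical point of $\mathcal{A}$ on all of $\Lambda$, i.e.\ a classical periodic solution of \eqref{eq:newton} with the prescribed symmetry. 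The only real obstacle is the collision exclusion of the third paragraph; everything else is the direct method together with bookkeeping through the reductions already proved.
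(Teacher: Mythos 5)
Your proposal is correct and follows essentially the same route as the paper, which presents this theorem as the summary of the chain it has just assembled: coercivity from $\mathcal{X}^G=\{0\}$ via Lemma \ref{lem:local_coercivity}, existence of a minimiser by the direct method, collision exclusion via Lemma \ref{lem:ferrario_terracini}, and the passage back to a classical $G$-equivariant solution through Proposition \ref{prop:homo_fund_domain}, the regularity theorem of Section \ref{sec:optimisation} and the Palais principle (Lemma \ref{lem:palais}). The details you add beyond the paper's sketch --- verifying $\mathcal{A}_{\mathbb{I}}\not\equiv+\infty$ and the weak lower semicontinuity needed for the direct method --- are consistent with what the paper delegates to \cite{FT2003}.
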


\bibliography{references}
\bibliographystyle{plain}

\medskip

\noindent
V. Barutello\\
Dipartimento di Matematica ``Giuseppe Peano'', Universit\`a degli Studi di Torino\\
Via Carlo Alberto 10, 10123 Torino, Italy\\
\texttt{vivina.barutello@unito.it}

\medskip

\noindent
M. G. Bergomi\\
Independent Researcher \\
Milano, Italy\\
\texttt{mattiagbergomi@gmail.com}

\medskip

\noindent
G. M. Canneori \\
Dipartimento di Matematica ``Giuseppe Peano'', Universit\`a degli Studi di Torino\\
Via Carlo Alberto 10, 10123 Torino, Italy\\
\texttt{gianmarco.canneori@unito.it}

\medskip

\noindent 
R. Ciccarelli \\
Dipartimento di Matematica ``Giuseppe Peano'', Universit\`a degli Studi di Torino\\
Via Carlo Alberto 10, 10123 Torino, Italy\\
\texttt{roberto.ciccarelli@unito.it} 

\medskip

\noindent
D. L. Ferrario \\
Dipartimento di Matematica e Applicazioni, Universit\`a degli Studi di Milano-Bicocca\\
Via Roberto Cozzi 55, 20125 Milano, Italy\\
\texttt{davide.ferrario@unimib.it}

\medskip

\noindent
S. Terracini\\
Dipartimento di Matematica ``Giuseppe Peano'', Universit\`a degli Studi di Torino\\
Via Carlo Alberto 10, 10123 Torino, Italy\\
\texttt{susanna.terracini@unito.it}

\medskip

\noindent
P. Vertechi \\
Independent Researcher \\
Trieste, Italy\\
\texttt{pietro.vertechi@protonmail.com}

\end{document}